\definecolor{darkred}{rgb}{1,0,0} 
\definecolor{darkgreen}{rgb}{0,0.8,0}
\definecolor{darkblue}{rgb}{0,0,1}
\numberwithin{equation}{section}
\newtheorem {Theorem}{Theorem}
\numberwithin{Theorem}{section}
\newtheorem {Lemma}[Theorem]    {Lemma}
\newtheorem {Claim}[Theorem]    {Claim}
\newtheorem {Proposition}[Theorem]{Proposition}
\theoremstyle{definition}
\newtheorem{Definition}[Theorem]{Definition}
\theoremstyle{remark}
\newtheorem{Remark}[Theorem]{Remark}
\newtheorem{Example}[Theorem]{Example}
\chardef\csname pre amssym.def
\def\undefine#1{\let#1\undefined}
\def\newsymbol#1#2#3#4#5{\let\next@\relax
 \ifnum#2=\@ne\let\next@\msafam@\else
 \ifnum#2=\tw@\let\next@\msbfam@\fi\fi
 \mathchardef#1="#3\next@#4#5}
\def\mathhexbox@#1#2#3{\relax
 \ifmmode\mathpalette{}{\m@th\mathchar"#1#2#3}%
 \else\leavevmode\hbox{$\m@th\mathchar"#1#2#3$}\fi}
\def\hexnumber@#1{\ifcase#1 0\or 1\or 2\or 3\or 4\or 5\or 6\or 7\or 8\or
 9\or A\or B\or C\or D\or E\or F\fi}
\font\teneufm=eufm10 \font\seveneufm=eufm7 \font\fiveeufm=eufm5
\def    \eps    {\epsilon}
\newcommand{\FF}{{\mathcal F}}
\newcommand{\CA}{{\mathcal A}}
\newcommand{\CS}{{\mathcal S}}
\newcommand{\supp}{\operatorname{supp}}
\newcommand{\hook}{\hookrightarrow}
\newcommand{\hu}{\hat{u}}
\newcommand{\A}{{\mathcal A}}
\newcommand{\Cc}{{\mathcal C}}
\newcommand{\Ff}{{\mathcal F}}
\newcommand{\lmd}{{\lambda - \delta}}
\newcommand{\lpd}{{\lambda + \delta}}
\newcommand{\Klre}{{K_{\lambda,r,\varepsilon}}}
\newcommand{\Kle}{{K_{\lambda,\varepsilon}}}
\newcommand{\D}{{\Delta}}
\def    \C      {{\mathbb C}}
\def    \reals      {{\mathbb R}}
\def    \Z      {{\mathbb Z}}
\def    \12    {{\frac{1}{2}}}
\def    \bg    {\bar{\gamma}}
\def    \bc    {\bar{c}}
\def    \by    {\bar{y}}
\def    \p      {\partial}
\def    \Sp     {\operatorname{Sp}}
\def    \HF     {\operatorname{HF}}
\def    \bx     {\bar{x}}
\def    \by     {\bar{y}}
\def    \MUCZ  {\operatorname{\mu_{\scriptscriptstyle{CZ}}}}
\begin{document}


\setlength{\textheight}{7.5in} \setlength{\textwidth}{5in}




\title[Rigidity of the Coisotropic Maslov Index]{\small On the Rigidity of the Coisotropic Maslov Index on Certain Rational Symplectic Manifolds}

\author[Marta Bator\'eo]{Marta Bator\'eo}

\address{Department of Mathematics, University of California at Santa Cruz,
Santa Cruz, CA 95064, USA} \email{mbator@ucsc.edu}

\begin{abstract}
We revisit the definition of the Maslov index of loops in
coisotropic submanifolds tangent to the characteristic foliation of
this submanifold. This Maslov index is given by the mean index of a
certain symplectic path which is a lift of the holonomy along the
loop. We prove a Maslov index rigidity result for stable coisotropic
submanifolds in a broad class of ambient symplectic manifolds.
Furthermore, we establish a nearby existence theorem for the same
class of ambient manifolds.
\end{abstract}

\maketitle

\tableofcontents

\section{Introduction and Main Results}

\subsection{Introduction}
The main result of this paper is the Maslov index and symplectic
area rigidity for coisotropic submanifolds in a broad class of
ambient symplectic manifolds. In~\cite{Zi} and~\cite{Gi:maslov}, the
Maslov index is defined for loops in coisotropic submanifolds which
are tangent to the characteristic foliation of the coisotropic
submanifold. The Maslov index of such a loop, $x\colon S^1
\rightarrow W,$ is the (Conley-Zehnder) mean index $\D$ of a
symplectic path which is a lift of the holonomy along the loop to
the pull-back bundle $x^*TW.$ Although such a lift is not unique,
the coisotropic Maslov index $\mu$ is well-defined. The Maslov index
is a real valued index and it generalizes the usual Lagrangian
Maslov index.

With this definition of the coisotropic Maslov index, we prove a
result on the Maslov class rigidity. More specifically, given a
closed displaceable stable coisotropic submanifold, we show that
there exists a non-trivial loop lying in the submanifold with Maslov
index bounded below by $1$ and above by $2n+1-k,$ where $2n$ is the
dimension of the symplectic manifold and $k$ the codimension of the
coisotropic submanifold. Moreover, the result gives bounds on the
symplectic area bounded by the loop; this area is positive and
bounded above by the displacement energy of the coisotropic
submanifold. This result was proved by Ginzburg in~\cite{Gi:maslov}
for ambient symplectic manifolds which are symplectically
aspherical. The case where the characteristic foliation is a
fibration is also considered in \cite{Zi}. In this paper, we extend
the result to certain rational manifolds which need not be
symplectically aspherical. In the \emph{spherical} case, the
obtained loop may be trivial with non-trivial capping. Hence, in our
theorem we state conditions on the ambient manifold for which this
loop is non-trivial and has the referred bounds on the Maslov index
and on the symplectic area. For instance, we have non-triviality and
the desired bounds when the manifold is negative monotone.

The Maslov class rigidity for Lagrangian submanifolds was originally
studied by Viterbo in~\cite{Vi} for the Lagrangian torus and by
Polterovich in~\cite{Po1, Po2}, for instance, for monotone
Lagrangian submanifolds. These results show that the Maslov class
satisfies certain restrictions. Namely, the minimal Maslov number
lies between $1$ and $n+1$. Audin was the first to conjecture (as
far as we know) that the minimal Maslov number is $2$ for the
Lagrangian torus; cf.~\cite{Au}. Fukaya proved this conjecture
in~\cite{Fu}. There are two methods to prove this type of results.
One approach, introduced by Gromov (see~\cite{Gr}), uses holomorphic
curves. This approach is the one used, for instance, by Audin and
Polterovich (see also~\cite{ALP}). A different approach relies on
Hamiltonian Floer homology and is found, for instance, in the work
of Viterbo, Kerman and Sirikci; see also~\cite{Ke:pinned, KS}.

The proof of our result follows the method used by Ginzburg
in~\cite{Gi:maslov} which is based on the second approach mentioned
above together with the stability condition and certain lower bounds
on the energy estimated by Bolle in~\cite{Bo:fr, Bo:en}. The proof
also relies on a suitable action selector (introduced
in~\cite{Ke:pinned, KS}).

Furthermore, we state a theorem (and outline its prove) of dense or
nearby existence, that is, a theorem which guarantees the existence
of periodic orbits for a dense set of energy levels. This result is
presented in~\cite{Gi:coiso} for symplectically aspherical manifolds
and as mentioned there it can be viewed as a generalization of the
existence of closed characteristics on stable hypersurfaces in
$\reals^{2n}$, established in~\cite{HZ}. We state this nearby
existence theorem for a broader class of rational symplectic
manifolds.

\subsection{Coisotropic Maslov Index}
\label{subsection:maslov_index} Let $(W^{2n},\omega)$ be a
symplectic manifold and $M^{2n-k}$ a closed coisotropic submanifold
of $W$ of codimension $k.$ Then $(T_pM)^{\omega} \subseteq$~$T_pM$
for each $p\in M$ and, denoting by $\omega_{M}$ the restriction of
$\omega$ to $M,$ we note that the distribution $TM^{\omega} := \ker
\omega_{M}$ on $M$ is integrable. By the Frobenius theorem, there is
a foliation $\Ff$ (the \emph{characteristic foliation}) on $M$ whose
tangent spaces are given by $TM^{\omega},$ i.e., $T\Ff = \ker
\omega_M$, and the rank of this foliation is $k$.

Consider a capped loop $\bx=(x,u)$ tangent to $T\Ff$ and the
holonomy along~$x$
$$
H_t\colon T^{\perp}\Ff_{x(0)} \rightarrow T^{\perp}\Ff_{x(t)}.
$$
There is a symplectic vector bundle decomposition of the restriction
of $TW$ to $M$:
$$
TW\big|_{M}= (T\Ff \oplus T^{\perp}M)\oplus T^{\perp}\Ff
$$
where we identify the normal bundle $T^{\perp}\Ff$ to $\Ff$ in $M$
with $TM/T\Ff$ and the normal bundle $T^{\perp}M$ to $M$ in $W$ with
$TW/TM.$ Lift the holonomy along $x$ to $x^* TW.$ The capping $u$
gives rise to a symplectic trivialization of $x^*TW$, unique up to
homotopy, and hence this lift can be viewed as a symplectic path
$$\Psi\colon [0,1] \rightarrow \Sp(2n).$$
Following \cite{Zi} (see also \cite{Gi:maslov}) we adopt
\begin{Definition}
The \emph{coisotropic Maslov index} is defined (up to a sign) as the
mean index of this path, i.e.,
$$
\mu(x,u) :=-\D(\Psi).
$$
\end{Definition}
This Maslov index is real valued (see
Example~\ref{example:realMaslovindex}) and is independent of the
lift of the holonomy along $x.$ However, in general, it depends on
the trivialization arising from the capping $u.$ We refer the reader
to the appendix (section~\ref{section:coiso_maslov_index}) for the
definitions of the indices. The proof that this Maslov index is
well-defined can be found in \cite{Zi}. In the appendix, for the
sake of completeness, we give a direct proof of this fact.
\begin{Example}
\label{example:realMaslovindex} Consider the Hamiltonian defined in
$(\mathbb{C}^n,\omega_0)$ by
$$ H(z):=1/2\sum_{l=1}^{n} \lambda_l |z_l|^2$$ with $\lambda_l\in
\reals^+$ (where $\omega_0$ is the standard symplectic form). The
ellipsoid defined as the regular level set $H^{-1}(\{1\})$ is a
hypersurface (and hence a coisotropic submanifold) of
$\mathbb{C}^n.$ For each $j=1,\ldots, n,$ the loop parameterized by
$$\gamma_j(t):=(0,\ldots,0,z_j(t),0,\ldots,0)$$
where
$$
z_j(t)=e^{- i\lambda_j t}z_j
$$
(with $|z_j|^2=2 / \lambda_j$ and $t\in[0,2\pi / \lambda_j]$) is a
periodic orbit of the Hamiltonian system of $H$ lying in
$H^{-1}(\{1\})$. A calculation shows that the Maslov index of the
loop $(\gamma_j,u_j)$ is given by
$$
\mu(\gamma_j,u_j)=-\D(\gamma_j,u_j)=\frac{2}{\lambda_j}
\displaystyle\sum_{l=1}^{n}\lambda_l
$$
where $u_j$ is some capping of $\gamma_j.$ In this case, the index
is independent of the capping we use.

To compute $\mu(\gamma_j,u_j),$ we use
$\Psi_t=d(\varphi^t_H)_{\gamma(0)}$ the linearized flow along
$\gamma.$ The foliation $\Ff$ is formed by the integral curves of
$\varphi^t_H.$ See section~\ref{subsubsection:ham_act__mean_ind} for
the description of the Maslov index when the loop is a periodic
orbit of a Hamiltonian.
\end{Example}

\subsection{Rigidity of the Coisotropic Maslov Index (Main Theorem)}
In this section, we state and discuss the main theorem of this
paper.
\begin{Theorem}
\label{theorem:main} Let $(W^{2n},\omega)$ be a rational closed
symplectic manifold, $M^{2n-k}$~$\subset$~$W^{2n}$ a closed stable
displaceable coisotropic submanifold of $W$ and $\Ff$ its
characteristic foliation.

Assume that one of the following conditions is satisfied
        \begin{itemize}
        \item $W$ is negative monotone,
        \item $e(M)< \hbar,$ where $e(M)$ is the displacement energy of $M$ and $\hbar$ is the rationality constant of $W,$
        \item $2n+1<2N$, where $N$ is the minimal Chern number of $W.$
        \end{itemize}

Then, for all $\varepsilon >0$, there exists a capped loop
$\bg=(\gamma, v)$ such that $\gamma$ is a non-trivial loop tangent
to $\Ff$ and
    \begin{eqnarray*}
    1 \leq &\mu(\bg)&  \leq 2n+1-k,\\
    0 <    &\mbox{Area}(\bg)& \leq e(M) + \varepsilon,
    \end{eqnarray*}
where $\mbox{Area}(\bg) :=\displaystyle\int_v \omega.$
\end{Theorem}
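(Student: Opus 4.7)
The plan is to follow the Hamiltonian Floer-theoretic strategy of Ginzburg \cite{Gi:maslov}, upgraded from the symplectically aspherical setting to the three rational regimes in the statement. The overall outline is: build a family of Hamiltonians concentrated in a shrinking tubular neighborhood of $M$; use displaceability together with a Kerman--Sirikci-type action selector \cite{Ke:pinned, KS} to produce $1$-periodic orbits with controlled action and Conley--Zehnder index; extract a leafwise limit loop $\gamma$; and finally verify its non-triviality using the relevant hypothesis on $W$.

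First I would set up a symplectic normal form around $M$. The stability condition yields closed $1$-forms $\alpha_1,\ldots,\alpha_k$ on $M$ whose restrictions to $T\Ff$ form a coframe, and a Weinstein-type theorem then identifies a neighborhood of $M$ in $W$ with $(M\times B^k,\,\omega_M+d(\sum_j p_j\alpha_j))$, where $p_j$ are coordinates on $B^k$. On this model I would construct a pinned family of Hamiltonians $\Klre$---vanishing on $M$, growing with slope $\lambda$ in a normal shell of thickness $r$, and flat outside---whose $1$-periodic orbits split into leafwise curves on $M$ at prescribed periods together with constants in the flat regions.

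Because $M$ has displacement energy $<e(M)+\varepsilon$, the action selector applied to the class detecting leafwise orbits produces, for each $\lambda$, a capped $1$-periodic orbit $(\gamma_\lambda,v_\lambda)$ of $\Klre$ with
$$
0<\CA_{\Klre}(\gamma_\lambda,v_\lambda)\leq e(M)+\varepsilon,
$$
together with a Conley--Zehnder index confined to a window of width of order $k$ around $n$. Bolle's leafwise lower energy estimates \cite{Bo:fr, Bo:en} prevent $\gamma_\lambda$ from collapsing into the flat region. Sending $\lambda\to\infty$ and $r\to 0$, I would use Arzel\`a--Ascoli for the orbits together with Gromov--Floer compactness for the cappings to extract a subsequential limit $(\gamma,v)$ with $\gamma$ tangent to $\Ff$, the area bound $\mbox{Area}(\bar\gamma)\leq e(M)+\varepsilon$, and, via the standard inequality $|\MUCZ-\D|<n$, the Maslov-index bound $1\leq\mu(\bar\gamma)\leq 2n+1-k$.

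The main obstacle, and the entire reason the three alternative hypotheses appear, is to exclude the possibility that $\gamma$ is constant with a non-trivial sphere recapping $A\in\pi_2(W)$---a scenario invisible in the aspherical setting of \cite{Gi:maslov}. Each hypothesis rules this out by a different accounting of the capping. Negative monotonicity forces $c_1(A)$ and $\omega(A)$ to have opposite signs, which is incompatible with positive area together with a positive lower bound on the Maslov index of a constant loop. The inequality $e(M)<\hbar$ bounds the total symplectic area of $v$ strictly below the rationality constant, so rationality prevents any non-zero sphere contribution. The inequality $2n+1<2N$ means that any non-trivial recapping shifts the Maslov index by at least $2N>2n+1-k$, overshooting the admissible window. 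Executing this capping bookkeeping rigorously in each of the three cases is the technical heart of the proof; the remainder is a careful elaboration of the Floer and selector machinery already present in \cite{Gi:maslov, Ke:pinned, KS}.
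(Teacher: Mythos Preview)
Your overall strategy matches the paper's: pinned Hamiltonians in a stable tubular neighborhood, a Kerman--Sirikci action selector producing special orbits with $\MUCZ=n+1$ and action in $(\lambda,\lambda+\|H\|]$, a localization/limit argument giving a leafwise loop, and the three hypotheses ruling out trivial loops with spherical cappings. However, there is one genuine logical gap in your sequencing.

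You claim the bound $1\le\mu(\bar\gamma)\le 2n+1-k$ directly from the standard inequality $|\MUCZ-\D|<n$. That inequality, applied to $\MUCZ=n+1$, only yields $1\le\mu(\bar\gamma)\le 2n+1$; the $k$ does not appear. The improvement to $2n+1-k$ comes from an \emph{asymmetric} index estimate specific to stable coisotropics: for a non-degenerate perturbation of a \emph{non-trivial} orbit of the leafwise geodesic flow $\rho$, one has
\[
\D_\rho(\bar x)-n\ \le\ \MUCZ(\bar x')\ \le\ \D_\rho(\bar x)+(n-k),
\]
the asymmetry reflecting the $k$ zero eigenvalues of $d^2\rho$ along the leaf. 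This estimate requires the limiting orbit to be non-trivial. Consequently the correct order is the one the paper follows: (i) use the crude bound $1\le\mu\le 2n+1$ together with $0<\mathrm{Area}(\bar\gamma)\le e(M)+\varepsilon$; (ii) invoke one of the three hypotheses to exclude a constant loop with non-trivial sphere capping (your accounting here is correct); (iii) \emph{then}, knowing $\gamma$ is non-trivial, apply the sharper asymmetric estimate to upgrade the upper bound to $2n+1-k$. Your write-up inverts (ii) and (iii), and never identifies the stability-specific index inequality that produces the $-k$.

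Two minor points. First, in the paper's construction $K$ attains its \emph{maximum} $\lambda$ on $M$ and decreases outward, rather than vanishing on $M$ and growing; this is what makes the fundamental class $[W]\in HF_n$ the relevant selector class. Second, the paper does not send $\lambda\to\infty$: it fixes $\lambda>e(U_r)$ and instead sends the concavity parameter $\varepsilon_i\to 0$ (a localization lemma pins the special orbits to the concave shell $|p|\in[\varepsilon_i,2\varepsilon_i]$), then lets $r\to 0$. No Gromov--Floer compactness for cappings is needed; bounded slope plus Arzel\`a--Ascoli on the orbits suffices.
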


\begin{Remark}
The condition that $W$ is closed can be replaced in the theorem by
geometrically bounded and wide. Recall that a symplectic manifold is
said to be \emph{wide} if it admits an arbitrarily large, compactly
supported, autonomous Hamiltonian whose Hamiltonian flow has no
non-trivial contractible periodic orbits of period less than or
equal to one; see~\cite{Gu} for more details. The proof of the
theorem in this case is essentially the same as when $W$ is closed.
\end{Remark}

\begin{Remark} In~\cite{Gi:maslov}, Ginzburg proves Theorem~\ref{theorem:main} when $W$ is
symplectically aspherical; see
section~\ref{subsection:symplmnflds&hams} for the definition.
\end{Remark}

\begin{Remark} The requirements that $M$ is displaceable and stable are essential. For instance, a closed manifold
$M$ viewed as the zero section of its cotangent bundle $T^*M$ is not
displaceable (cf.~\cite{Gr}) and the Maslov index of a loop in $M$
is always trivial since $\pi_2(T^*M,M)=0$. Moreover, the assumption
that $M$ is stable cannot be entirely omitted: there exist
Hamiltonian systems having no periodic orbits on a compact energy
level which arise as counterexamples to the Seifert conjecture;
 cf.~\cite{Gi:ham_dyn_sys_w/o_per_orbs, GG:counterexample}.
\end{Remark}

\subsection{Acknowledgments} The author is grateful to Viktor
Ginzburg for posing the problem and useful discussions and Fabian
Ziltener for valuable remarks.

\section{Preliminaries}
In this section we introduce the notation used throughout the paper
and review some facts needed to prove the results.

\subsection{Symplectic Manifolds and Hamiltonians}
\label{subsection:symplmnflds&hams} Let $(W^{2n},\omega)$ be a
closed rational symplectic manifold and consider an almost complex
structure $J$ on $W$ compatible with $\omega,$ i.e., such that
$\left< \xi, \eta \right> := \omega(\xi, J\eta)$ is a Riemannian
metric on $W.$

Recall that $(W, \omega)$ is \emph{closed} if it is compact with no
boundary and is said to be \emph{(spherically) rational} if the
group
$$
\left<[\omega], \pi_2(W) \right> \subset \reals
$$
formed by the integrals of $\omega$ over the spheres in $W$ is
discrete, that is,
$$
\left< [\omega], \pi_2(W) \right>=\hbar \Z
$$
where $\hbar\geq 0.$ When $\left< [\omega], \pi_2(W) \right>=0$ we
set $\hbar=\infty.$ The constant $\hbar$ is called the
\emph{rationality constant} and it is the infimum over the
symplectic areas of all nonconstant spheres in $W$ with positive
area. More explicitly,
$$
\hbar :=\inf_{A\in \pi_2(W)} \big{\{}\left< \omega, A\right> \colon
 \left< \omega, A\right> >0\big{\}}.
$$

Denote by $c_1 :=c_1(W,J)\in H^2(W,\Z)$ the first Chern class of
$W$. The \emph{minimal Chern number} of a symplectic manifold $(W,
\omega)$ is the integer $N$ which generates the discrete group
$\left<c_1, \pi_2(W)\right> \subset \reals$ formed by the integrals
of $c_1$ over the spheres in $W$, i.e.,
$$
\left<c_1, \pi_2(W)\right> = N \Z
$$
where $N\in \Z^+.$ When $\left<c_1, \pi_2(W)\right>=0,$ we set
$N=\infty$. The constant $N$ is given explicitly by
$$N :=\inf_{A\in\pi_2(W)} \{ \left< c_1,A\right>   \colon \left< c_1,A\right> >0 \}.$$

A symplectic manifold $(W,\omega)$ is called \emph{monotone}
(\emph{negative monotone}) if the cohomology classes $c_1$ and
$[\omega]$ satisfy the condition
$$
{c_1}|_{\pi_2(W)} = \tau \;[\omega]|_{\pi_2(W)}
$$
for some non-negative (respectively, negative) constant $\tau\in
\reals.$

The manifold $(W,\omega)$ is called \emph{symplectically aspherical}
if
$$
{c_1|}_{\pi_2(W)}=0=[\omega]|_{\pi_2(W)}.
$$
Notice that a symplectically aspherical manifold is monotone and a monotone (or negative monotone) manifold is rational.\\

All the Hamiltonians~$H$ on $W$ considered in this paper are assumed
to be compactly supported and one-periodic in time, namely,
$$
H\colon S^1\times W \rightarrow \reals,
$$
where $S^1=\reals / \Z,$ and we set $H_t=H(t,\cdot)$ for $t\in S^1$.
The Hamiltonian vector field $X_H$ of $H$ is defined by
$\iota_{X_H}\omega=-dH.$ The time-one map of the flow of the
Hamiltonian vector field $X_H$ is called a \emph{Hamiltonian
diffeomorphism} and denoted by $\varphi_H.$

The composition $\varphi_H^t \circ \varphi_K^t$ of two Hamiltonian
flows is again Hamiltonian and it is generated by $K\#H$ where
\begin{eqnarray}
(K\#H)_t :=K_t + H_t \circ (\varphi_K^t)^{-1}.
\end{eqnarray}
In general, $K\#H$ is not a one-periodic Hamiltonian. However,
$K\#H$ is one-periodic if $H_0 = 0 = H_1$. This condition can be met
by reparametrizing the Hamiltonian as a function of time without
changing the time-one map. Thus, in what follows, we will usually
treat $K\#H$ as a one-periodic Hamiltonian.

The \emph{Hofer norm} of a one-periodic Hamiltonian~$H$ is defined
by
$$
||H||:=\displaystyle\int_0^1(\displaystyle\max_W H_t -
\displaystyle\min_W H_t) dt.
$$
The Hamiltonian diffeomorphism~$\varphi_H$ is said to
\emph{displace} a subset $U$ of $W$ if
$$
\varphi_H(U)\cap U =\emptyset.
$$
When such a map exists, we call $U$ \emph{displaceable} and define
the \emph{displacement energy of $U$} to be
$$
e(U):=\inf\{||H|| \colon  \varphi_H\;\mbox{displaces}\; U \}
$$
where $||\cdot||$ is the Hofer norm.

\subsection{Capped Periodic Orbits and Floer Homology}
\label{subsection:capped orbits} Let $x\colon S^1 \rightarrow W$ be
a contractible loop with capping $u\colon D^2 \rightarrow W,$ i.e.,
$u|_{\partial D^2}=x$. Two cappings $u$ and $v$ of $x$ are called
\emph{equivalent} if the integrals of $\omega$ and of $c_1$ over the
sphere obtained by attaching $u$ to $v$ are both equal to zero. For
instance, when $W$ is symplectically aspherical, all cappings of $x$
are equivalent. A \emph{capped closed curve} $\bx$ is, by
definition, a closed curve $x$ equipped with an equivalence class of
cappings.

\subsubsection{Hamiltonian Action and the Mean Index}
\label{subsubsection:ham_act__mean_ind} The \emph{action functional}
of a one-periodic Hamiltonian~$H$ on a capped closed curve
$\bx=(x,u)$ is defined by
$$
\A_H(\bx) :=-\int_u \omega + \int_{S^1} H_t(x(t)) dt.
$$
The space of capped closed curves is a covering space of the space
of contractible loops and the critical points of the action
functional are exactly the capped one-periodic orbits of the
Hamiltonian vector field $X_H.$ The \emph{action spectrum} $\CS(H)$
of $H$ is the set of critical values of the action.

A (capped) periodic orbit $\bx$ of $H$ is said to be
\emph{non-degenerate} if the linearized return map
$$
d\varphi_H\colon T_{x(0)}W \rightarrow T_{x(0)}W
$$
has no eigenvalues equal to one. Note that capping has no effect on
degeneracy or non-degeneracy of $\bx.$

Using a trivialization of $x^*TW$ arising from the capping of $\bx,$
the linearized flow along $x$
$$d\varphi_H^t\colon T_{x(0)}W \rightarrow T_{x(t)}W$$
can be viewed as a symplectic path $\Phi\colon[0,1] \rightarrow
\Sp(2n).$ The mean index of $\bx$ is defined by $\D(\bx)
:=\D(\Phi);$ see Definition~\ref{def:mean_index}. When we need to
emphasize the role of $H$, we write $\D_H(\bx).$ A list of
properties of the mean index can be found in
section~\ref{section:coiso_maslov_index}. In general, the mean index
and the action depend on the equivalence class of the capping $u$ of
the loop $x.$ More concretely, let $A$ be a $2$-sphere and denote by
$\bx\#A$ the recapping of $\bx$ by attaching $A.$ Then we have
$$
\D(\bx\#A)=\D(\bx)-2\left<c_1,A\right>\quad \mbox{and} \quad
\CA_H(\bx\#A)=\CA_H(\bx) -\int_A \omega.
$$

\subsubsection{Conley-Zehnder Index and Floer Homology}
Consider a non-degenerate path $\Phi\colon [0,1] \rightarrow
\Sp(2n),$ i.e., such that $\Phi(1)$ has no eigenvalues equal to one.
We denote by $\MUCZ(\Phi)$ the \emph{Conley-Zehnder index} of
$\Phi.$ For a non-degenerate capped closed orbit $\bx=(x,u),$ its
Conley-Zehnder index is given by the Conley-Zehnder index of the
symplectic path $\Phi$ obtained from the linearized flow
$d\varphi_H^t$ and a trivialization arising from the capping $u$. Up
to a sign, it is defined as in~\cite{Sa,SZ} and we use the
normalization such that $\MUCZ(\bx) = n$ when $\bx$ is a
non-degenerate maximum (with trivial capping) of an autonomous
Hamiltonian with small Hessian; cf.~\cite{GG:action}.

We have the following relation between the Conley-Zehnder and mean
indices for non-degenerate paths and orbits; cf.~\cite{SZ}:
\begin{eqnarray}
\label{eqnar:bounds_mean_CZ} |\D(\Phi)-\MUCZ (\Phi)|<n
\quad\mbox{and hence}\quad |\D(\bx)-\MUCZ(\bx)|<n.
\end{eqnarray}

Let us recall the definition of the Floer homology for a
non-degenerate Hamiltonian~$H$. The Floer chain groups are generated
by the capped one-periodic orbits of $H$ and graded by the
Conley-Zehnder index. The boundary operator is defined by counting
solutions of the Floer equation
$$
\frac{\p u}{\p s}+ J_t(u)\frac{\p u}{\p t}=-\bigtriangledown H_t(u)
$$
with finite energy. Floer trajectories for a non-degenerate
Hamiltonian~$H$ with finite energy converge to periodic orbits $\bx$
and $\by$ as $s\rightarrow\pm \infty$ and satisfy
$$
E(u)=\CA_H(\bx)-\CA_H(\by)=\displaystyle\int_{-\infty}^{\infty}\int_{S^1}
\Big{|}\Big{|}\frac{\p u}{\p s}\Big{|}\Big{|}^2 dt ds.
$$
The boundary operator counts Floer trajectories converging to
periodic orbits $y$ and $x$ as $s\rightarrow\pm \infty$ and
satisfying the condition [(capping of $\bx$)\#$u$] = [capping of
$\by$].

This construction extends by continuity from non-degenerate
Hamiltonians to all Hamiltonians; see~\cite{Sa,SZ} for more details.
\begin{Remark}
\label{rmk:novikov} The total Floer homology is independent of the
Hamiltonian and, up to a shift of the grading and the effect of
recapping, is isomorphic to the homology of $W$. More precisely, we
have
$$
HF_*(H) \cong H_{*+n}(W) \otimes \Lambda
$$
as graded $\Lambda$-modules; see, for instance,~\cite{GG:conley,MS}
and references therein for details on the definition of the Novikov
ring $\Lambda$. In particular, the fundamental class $[W]$ can be
viewed as an element of $HF_n(H)$.
\end{Remark}

\begin{Remark} To ensure that the Floer differential is defined, throughout this
paper we either assume $W$ to be weakly monotone (see,
e.g.~\cite{HS, MS, On, Sa}) or utilize the machinery of virtual
cycles (see, e.g.,~\cite{FO, FOOO, LT}). In our main result, one of
the possible conditions on $W$ is negative monotonicity. In this
case, $W^{2n}$ is weakly monotone if and only if $N \geq n - 2$,
where $N$ is the minimal Chern number.
\end{Remark}

\subsubsection{Filtered Floer Homology and Homotopy} Let us recall the definition of the
filtered Floer homology for a non-degenerate Hamiltonian $H.$ The
(total) chain Floer complex $CF_*(H)=\colon CF_*^{(-\infty,
\infty)}(H)$ admits a filtration by $\reals$. For each
$b\in(-\infty,\infty]$ outside $\CS(H),$ the chain complex
$CF_*^{(-\infty, b)}(H)$ is generated by the capped one-periodic
orbits of $H$ with action $\A_H$ less than~$b.$ For $-\infty\leq a <
b \leq \infty$ outside $\CS(H),$ set
$$
CF_*^{(a,b)}(H) :=CF_*^{(-\infty,b)}(H) / CF_*^{(-\infty,a)}(H).
$$
The boundary operator $\p\colon CF_*(H)\rightarrow CF_{*-1}(H)$
descends to $CF_*^{(a,b)}(H)$ and hence the \emph{filtered Floer
homology} $HF_*^{(a,b)}(H)$ is defined.

This construction also extends by continuity to all Hamiltonians.
For an arbitrary (one-periodic in time) Hamiltonian $H$ on $W,$ set
\begin{eqnarray}
\label{eqnar:def_FHdeg} HF_*^{(a,b)}(H) :=HF_*^{(a,b)}(\tilde{H})
\end{eqnarray}
where $\tilde{H}$ is a non-degenerate perturbation of $H$ and
$-\infty\leq a < b \leq \infty$ are outside $\CS(H)$.

When $a<b<c,$ we have $CF_*^{(b,c)}(H)=CF_*^{(a,c)}(H) /
CF_*^{(a,b)}(H)$ and thus obtain the long exact sequence
\begin{eqnarray}
\label{eqnar:longexactsequence} \ldots \rightarrow HF_*^{(a,b)}(H)
\rightarrow HF_*^{(a,c)}(H) \rightarrow HF_*^{(b,c)}(H) \rightarrow
HF_{*-1}^{(a,b)}(H) \rightarrow \ldots .\\ \nonumber
\end{eqnarray}

By definition, a \emph{homotopy} of Hamiltonians on $W$ is a family
of (one-periodic in time) Hamiltonians $H^s$ smoothly parameterized
by $s\in\reals$ and such that $H^s\equiv H^0$ when $s$ is near
$-\infty$ and $H^s\equiv H^1$ when $s$ is near $\infty$;
see~\cite{Gi:coiso} and references therein for the definitions,
properties and proofs.

Set
$$
E :=\displaystyle\int_{-\infty}^{\infty}\int_{S^1}
\displaystyle\max_W \partial_sH_t^s dt ds.
$$
For every $C\geq E,$ the homotopy induces a map of the filtered
Floer homology, which we denote by $\Psi_{H^0H^1},$  shifting the
action filtration by $C$:
\begin{eqnarray}
\label{eqnar:morphism} \Psi_{H^0H^1}\colon HF_*^{(a,b)}(H^0)
\rightarrow  HF_*^{(a+C,b+C)}(H^1).
\end{eqnarray}

\begin{Example}
\label{example:linear_homotopy} Let $H^s$ be an \emph{increasing
linear homotopy} from $H^0$ and $H^1,$ i.e.,
$$
H^s=(1-f(s))H^0 + f(s)H^1
$$
where $f\colon\reals \rightarrow [0,1]$ is a monotone increasing
compactly supported function equal to zero near $-\infty$ and equal
to one near $\infty.$ Since
\begin{eqnarray}
\label{eqnar:linearhomotopy} E\leq \int_{S^1} \displaystyle\max_W
(H^1-H^0) dt,
\end{eqnarray}
we have the homomorphism $\Psi_{H^0H^1}$ for every $C\geq\int_{S^1}
\displaystyle\max_W (H^1-H^0) dt.$
\end{Example}

Furthermore, we have the following continuity property for filtered
homology: let $(a^s,b^s)$ be a family (smooth in $s$) of non-empty
intervals such that $a^s$ and $b^s$ are outside $\CS(H^s)$ for some
homotopy $H^s$ and such that $(a^s,b^s)$ is equal to $(a^0,b^0)$
when $s$ is near $-\infty$ and equal to $(a^1,b^1)$ when $s$ is near
$\infty.$ Then there exists an isomorphism of homology
\begin{eqnarray}
\label{eqnar:homotopy_homology} HF^{(a_0,b_0)}(H^0)
\stackrel{\cong}{\longrightarrow} HF^{(a_1,b_1)}(H^1).
\end{eqnarray}
When the interval is fixed and the homotopy is monotone decreasing,
the isomorphism~(\ref{eqnar:homotopy_homology}) is in fact
$\Psi_{H^0H^1}$ which in general is not the case.

\subsection{Stable Coisotropic Submanifolds and Maslov Index}
\label{subsection:stable_coiso} In this section, we give the
definition and some properties of stable coisotropic submanifolds.
This class of coisotropic submanifolds was introduced
in~\cite{Bo:fr, Bo:en} and is defined as follows.

The submanifold $M$ is said to be \emph{stable} if there exist $k$
one-forms $\alpha_1,\ldots,\alpha_k$ on $M$ such that
$$
\mbox{Ker}\;d\alpha_i \supset \mbox{Ker}\; \omega_M \quad \mbox{for
all } i=1,\ldots,k
$$
and
$$
\alpha_1\wedge\ldots\wedge\alpha_k\wedge\omega_M^{n-k} \not= 0 \quad
\mbox{on M}.
$$
Notice that this condition is rather restrictive. For instance, a
stable Lagrangian submanifold is necessarily a torus and a stable
coisotropic submanifold is automatically orientable. Thus, examples
of stable coisotropic submanifolds include Lagrangian tori and also
contact hypersurfaces. Moreover, the stability condition is closed
under products. For more details, we refer the reader
to~\cite{Bo:fr, Bo:en, Gi:coiso}.

As a consequence of the Weinstein symplectic neighborhood theorem,
we obtain tubular neighborhoods of stable coisotropic submanifolds:
\begin{Proposition}[\cite{Bo:fr, Bo:en}]
\label{prop:nbhd} Let $M^{2n-k}$ be a closed stable coisotropic
submanifold of $(W^{2n}, \omega)$. Then, for $r>0$ sufficiently
small, there exists a neighborhood of $M$ in $W$ which is
symplectomorphic to
$$
U_r = \{ (q,p) \in M \times \reals^k \colon  |p|< r\}
$$
equipped with the symplectic form
$$
\omega=\omega_{M} + \displaystyle\sum_{j=1}^k d(p_j\alpha_j)
$$
where $p=(p_1,\ldots,p_k)$ are the coordinates in $\reals^k$ and
$|p|$ is the Euclidean norm of~$p$.
\end{Proposition}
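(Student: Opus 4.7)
The plan is to follow the standard template for symplectic neighborhood theorems: first construct an explicit model symplectic form on $U_r$, then build a diffeomorphism onto a tubular neighborhood of $M$ in $W$ that matches the relevant data along $M \times \{0\}$, and finally apply Moser's trick to upgrade the match into a genuine symplectomorphism.

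The first step is to verify that $\omega_r := \omega_M + \sum_{j=1}^k d(p_j \alpha_j)$ is a symplectic form on $U_r$ for $r$ sufficiently small. Closedness is automatic: $\omega_M = \omega|_M$ is closed and $\sum d(p_j \alpha_j)$ is exact. For non-degeneracy, note that at points of $M \times \{0\}$ we have
$$
\omega_r|_{p=0} = \omega_M + \sum_{j=1}^k dp_j \wedge \alpha_j,
$$
so $\omega_r^n|_{p=0}$ is a non-zero multiple of $\omega_M^{n-k} \wedge \alpha_1 \wedge \cdots \wedge \alpha_k \wedge dp_1 \wedge \cdots \wedge dp_k$. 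This top form is nowhere vanishing on $M \times \{0\}$ thanks to the stability hypothesis $\alpha_1 \wedge \cdots \wedge \alpha_k \wedge \omega_M^{n-k} \neq 0$, and by continuity $\omega_r$ remains non-degenerate on a neighborhood of $M \times \{0\}$.

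Next, I would construct a diffeomorphism $\Phi_0 \colon U_r \to V$ extending the inclusion $M \hookrightarrow W$, where $V$ is a tubular neighborhood of $M$ in $W$, such that $\Phi_0^* \omega$ and $\omega_r$ coincide as 2-forms on $T(M \times \reals^k)|_{M \times \{0\}}$. On $TM$ both sides restrict to $\omega_M$, so the issue is matching the mixed part $\sum dp_j \wedge \alpha_j$. The map $TW|_M / TM \to (T\Ff)^*$ sending $[v] \mapsto \omega(v,\cdot)|_{T\Ff}$ is an isomorphism (since $(TM)^\omega = T\Ff$), and the stability condition forces $\alpha_1|_{T\Ff}, \ldots, \alpha_k|_{T\Ff}$ to be a frame of $(T\Ff)^*$. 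Pulling back this frame via the isomorphism yields a canonical frame $\nu_1, \ldots, \nu_k$ of the normal bundle to $M$ in $W$ with $\omega(\nu_i, \cdot)|_{T\Ff} = \alpha_i|_{T\Ff}$. Choosing an auxiliary Riemannian metric and lifts of the $\nu_j$ to vector fields near $M$, set $\Phi_0(q,p) := \exp_q\bigl(\sum_j p_j \nu_j(q)\bigr)$. The auxiliary hypothesis $\ker d\alpha_i \supset \ker \omega_M$ is needed here to ensure the agreement of the two forms along the entire fiber $T(M \times \reals^k)|_{(q,0)}$, rather than merely on $TM \oplus 0$.

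Finally, Moser's argument concludes the proof. Consider the linear homotopy $\omega_t := (1-t)\omega_r + t \Phi_0^* \omega$ on $U_{r'}$ for some $r' \leq r$. Since $\omega_0 = \omega_1$ along $M \times \{0\}$, the family $\omega_t$ is symplectic on a uniform neighborhood of $M \times \{0\}$ for all $t \in [0,1]$. A relative Poincaré lemma supplies a 1-form $\beta$ with $d\beta = \omega_1 - \omega_0$ and $\beta$ vanishing along $M \times \{0\}$. Solve $\iota_{X_t} \omega_t = -\beta$; the resulting time-dependent vector field $X_t$ vanishes along $M \times \{0\}$, so its flow $\psi_t$ is defined up to $t = 1$ on some smaller $U_{r''}$, and the Moser identity gives $\psi_1^*(\Phi_0^* \omega) = \omega_r$. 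The composition $\Phi_0 \circ \psi_1$ is the desired symplectomorphism, after relabeling $r'' \rightsquigarrow r$. The main obstacle is step two: selecting the normal frame $\nu_j$ so that the pullback $\Phi_0^* \omega$ agrees with the specific model $\omega_r$ on $M \times \{0\}$, rather than only being symplectically equivalent to it as in the general Weinstein coisotropic neighborhood theorem. The stability 1-forms are precisely the data that make this pinning down canonical.
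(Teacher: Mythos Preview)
The paper does not supply its own proof of this proposition; it cites Bolle and simply remarks that the result is ``a consequence of the Weinstein symplectic neighborhood theorem.'' Your Moser-type argument is precisely the argument behind that remark, and the overall outline is correct.

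One point deserves correction: the hypothesis $\ker d\alpha_i \supset \ker\omega_M$ is \emph{not} what makes step two work. Along $M\times\{0\}$ the model form reduces to $\omega_M + \sum_j dp_j\wedge\alpha_j$, which involves only the $\alpha_j$ and not $d\alpha_j$; the agreement of $\Phi_0^*\omega$ with $\omega_r$ on $T(M\times\reals^k)|_{M\times\{0\}}$ is a purely linear condition on the $\alpha_j$ and the normal frame. (The $\ker d\alpha_i$ condition is used later, in the leaf-wise geodesic description of Proposition~\ref{prop:metric}, not for the normal form.) What step two actually requires, and what your sketch stops just short of, is choosing lifts $\nu_j\in TW|_M$ of the normal frame so that $\omega(\nu_j,\cdot)|_{TM}=\alpha_j$ on all of $TM$ (your construction only secures this on $T\Ff$) and so that $\omega(\nu_i,\nu_j)=0$. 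Both adjustments are standard: the first uses the isomorphism $T_qW/T_q\Ff \cong (T_qM)^*$ induced by $\omega$ to correct $\nu_j$ by an element of $TM$, and the second is the usual isotropic-complement trick. With those refinements your argument goes through.
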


Thus, such a neighborhood is foliated by a family of coisotropic
submanifolds $M_p = M \times \{p\}$ with $p\in B^k_r \colon =\{
p\in\reals^k\colon |p|< r\}$ and a leaf of the characteristic
foliation on $M_p$ projects onto a leaf of the characteristic
foliation on $M.$

Furthermore, we have
\begin{Proposition}[\cite{Bo:fr, Bo:en, Gi:coiso}]
\label{prop:metric} Let $M^{2n-k}$ be a stable coisotropic
submanifold of $(W^{2n}, \omega)$. Then
\begin{itemize}
\item the leaf-wise metric $(\alpha_1)^2 + \ldots + (\alpha_k)^2$ on $\Ff$ is leaf-wise flat;
\item the Hamiltonian flow of $\rho=(p_1^2+\ldots+p_k^2)/2$ is the leaf-wise geodesic flow of this metric.
\end{itemize}
\end{Proposition}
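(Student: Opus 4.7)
The plan is to prove the two statements in the order they are listed, as the second relies on the coframe structure established by the first.

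\smallskip

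For leaf-wise flatness, I start from the stability hypothesis. Fix a leaf $L$ of $\Ff$. Since $\ker d\alpha_i \supset \ker \omega_M = T\Ff$, the pullback of $d\alpha_i$ to $L$ vanishes; equivalently, each $\alpha_i|_L$ is a closed $1$-form on $L$. Next, the nondegeneracy condition $\alpha_1 \wedge \cdots \wedge \alpha_k \wedge \omega_M^{n-k} \neq 0$, combined with the fact that $\omega_M^{n-k}$ is nondegenerate on any complement of $T\Ff$ in $TM$, forces $\alpha_1 \wedge \cdots \wedge \alpha_k$ to be nonzero on $T\Ff$. Thus the $\alpha_i|_L$ form a global coframe of closed $1$-forms on the $k$-dimensional leaf $L$. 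By the Poincar\'e lemma, near any point of $L$ we can write $\alpha_i|_L = df_i$, and the functions $f_1, \ldots, f_k$ give local coordinates in which $\sum \alpha_i^2|_L = \sum df_i^2$ is the standard Euclidean metric. In particular, the induced leaf-wise metric is flat.

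\smallskip

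For the geodesic flow statement, I work in the coordinates of Proposition~\ref{prop:nbhd}. Expanding
$$
\omega = \omega_M + \sum_{j=1}^k dp_j \wedge \alpha_j + \sum_{j=1}^k p_j \, d\alpha_j,
$$
I decompose a candidate vector field $X = X^H + X^V$ with $X^H$ tangent to $M$ and $X^V$ in the $\reals^k$ direction, and solve $\iota_X \omega = -d\rho = -\sum p_j\, dp_j$. Trying $X^V = 0$ and $X^H \in T\Ff$, the term $\iota_{X^H}\omega_M$ vanishes since $X^H \in \ker\omega_M$, and $\iota_{X^H} d\alpha_j$ vanishes because $T\Ff \subset \ker d\alpha_j$. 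What remains is $-\sum (\alpha_j(X^H))\,dp_j$, so the equation reduces to $\alpha_j(X^H) = p_j$ for every $j$. Since the $\alpha_j|_L$ form a coframe on the leaf, this system has a unique solution $X^H(q,p) \in T_q\Ff$; by uniqueness of the Hamiltonian vector field, this is $X_\rho$.

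\smallskip

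Finally, I identify this vector field with the leaf-wise geodesic flow. Fix a leaf $L$ and the flat coordinates $f_1, \ldots, f_k$ with $df_i = \alpha_i|_L$. The condition $\alpha_j(X_\rho) = p_j$ reads $X_\rho = \sum_j p_j\, \partial_{f_j}$ on $L \times \{p\}$. Because the Hamiltonian flow preserves $p$, the coefficients $p_j$ are constant along trajectories, so $X_\rho$ is a constant vector field in the flat coordinates $f_i$. Its integral curves are therefore straight lines at unit-speed in the flat metric (after rescaling by $|p|$), which are precisely the geodesics of $\sum \alpha_j^2|_L$. Equivalently, $\rho = \tfrac12 \sum p_j^2$ is the kinetic-energy Hamiltonian associated with the flat metric expressed in the dual coframe, and its flow is the geodesic flow.

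\smallskip

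The main obstacle is the vector-field computation: one has to carefully split $X$ into its horizontal and vertical parts, track which contractions vanish by virtue of the stability conditions ($\iota_{T\Ff} d\alpha_j = 0$ and $\iota_{T\Ff}\omega_M = 0$), and observe that the remaining equation admits a unique solution precisely because the $\alpha_j|_L$ form a coframe — a fact which itself requires the first part of the proposition.
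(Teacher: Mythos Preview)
Your argument is correct. Note, however, that the paper does not actually prove this proposition: it is quoted as a known result from \cite{Bo:fr, Bo:en, Gi:coiso} and stated without proof. So there is no ``paper's own proof'' to compare against; you have supplied a self-contained argument where the author simply cites the literature.

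Your approach matches the standard one found in the cited references. The only minor comment: in the last paragraph you say the integral curves are ``straight lines at unit-speed \ldots\ (after rescaling by $|p|$)''. It is cleaner to observe that since $\rho = \tfrac12\sum p_j^2$ is exactly the kinetic-energy Hamiltonian of the flat metric written in the coframe $\alpha_j$ (with $p_j$ the conjugate momenta), its Hamiltonian flow is by definition the geodesic flow; the straight-line picture in the coordinates $f_j$ then confirms this. The rescaling remark is not needed.
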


Consider $\bx=(x,u)$ a non-trivial (capped) periodic orbit of the
Hamiltonian flow of $\rho.$ Then, as a consequence of
Proposition~\ref{prop:Maslov_welldefined}, we obtain that the mean
index $\D_{\rho}(\bx)$ of a periodic orbit $\bx$ of a leaf-wise
geodesic flow on $M$ is equal to, up to a sign, the coisotropic
Maslov index of the projection of $\bx$ on $M.$ More precisely,
\begin{eqnarray}
\label{eqnar:proj} \mu(\pi(x), \hu) = - \D_{\rho} (x,u)
\end{eqnarray}
where $\hu$ is the capping of the orbit $\pi(x)$ given by the
capping $u$ of $x$ together with the cylinder obtained from the
projection of $x$ on $M;$ see Figure~\ref{fig:u_hat}.
\begin{figure}[!htb]
  \centering
  \def\svgwidth{190pt}
  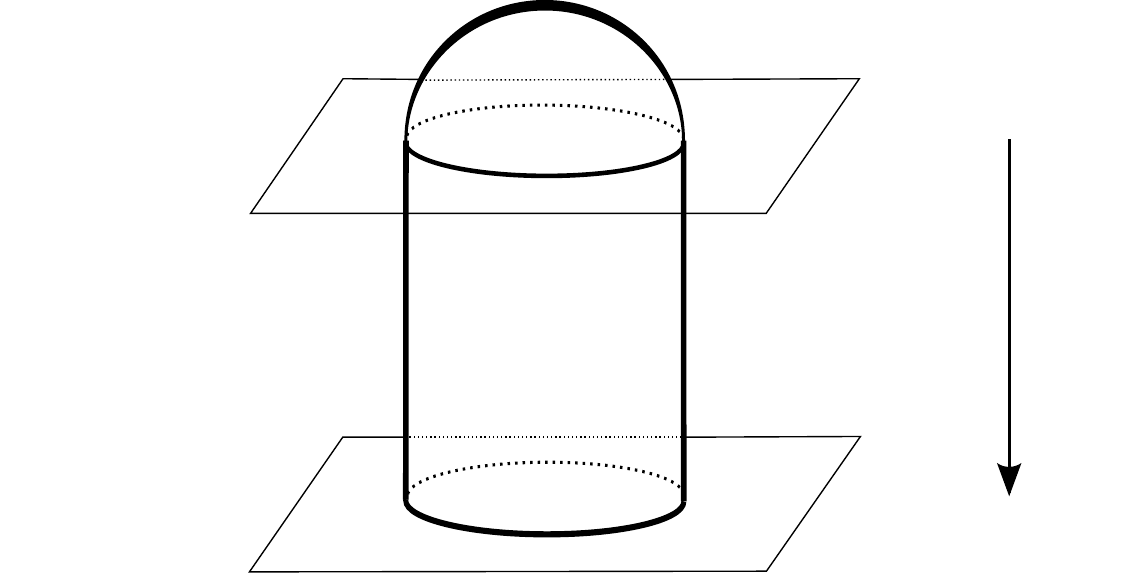
  \caption{Capping $\hu$.}\label{fig:u_hat}
\end{figure}

The following result establishes bounds on the Conley-Zehnder index
of a small non-degenerate perturbation of a capped periodic orbit
$(x,u)$ of $\rho$ which goes beyond~(\ref{eqnar:bounds_mean_CZ}).
(Here as above $M$ is stable.)
\begin{Proposition}[\cite{Gi:maslov}]
\label{prop:bound} Let $\rho'$ be a small perturbation of the
Hamiltonian $\rho$ defined in Proposition~\ref{prop:metric} and $x'$
a non-degenerate periodic orbit of $\rho'$ (with a capping $u'$)
close to a non-trivial periodic orbit $x$ of $\rho$ (with a capping
$u$). Then
$$
\D_{\rho}(x,u) - n \leq \MUCZ((x,u)') \leq \D_{\rho}(x,u) + (n-k)
$$
where $(x,u)':=(x',u').$
\end{Proposition}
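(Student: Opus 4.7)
The plan is to exploit the stable structure by splitting the linearized flow of $\rho$ along the orbit $x$ into a ``leaf-wise Jacobi shear'' piece and a ``transverse holonomy'' piece, and to bound the Conley-Zehnder index contribution of each. Using the model neighborhood of Proposition~\ref{prop:nbhd} and the leaf-wise flatness of Proposition~\ref{prop:metric}, the bundle $x^*TW$ splits symplectically along $x$ as
$$
x^*TW = E_1 \oplus E_2, \qquad E_1 := x^*T\Ff \oplus x^*T^\perp M, \qquad E_2 := x^*T^\perp\Ff,
$$
with $\rk E_1 = 2k$ and $\rk E_2 = 2(n-k)$. The flow of $\rho$ preserves this splitting: on $E_2$ it is the lift of the linearized characteristic holonomy, whose mean index is $-\mu(\pi(x),\hu)$ by definition of the coisotropic Maslov index; on $E_1$, trivialized via a parallel frame of $T\Ff$ together with the dual frame of $T^\perp M$ arising from the forms $\alpha_j$, it is the Jacobi shear
$$
\Phi_1(t) = \begin{pmatrix} I_k & tI_k \\ 0 & I_k \end{pmatrix},
$$
which integrates the flat-leaf Jacobi equation $\ddot J=0$.

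Since $\Phi_1$ has all eigenvalues equal to $1$, its mean index vanishes; additivity of the mean index under symplectic direct sums then forces the restriction to $E_2$ to have mean index $\D_\rho(x,u)$, consistent with~(\ref{eqnar:proj}). For a small non-degenerate perturbation $\rho'$ the linearized flow is $C^0$-close to $d\varphi_\rho$ and, after a small correction of the trivialization, nearly splits as $\Phi'_1\oplus\Phi'_2$, so the Conley-Zehnder index is additive up to this correction. The standard bound~(\ref{eqnar:bounds_mean_CZ}) applied on $E_2$ yields
$$
|\MUCZ(\Phi'_2) - \D_\rho(x,u)| \;<\; n-k.
$$
On $E_1$, the key observation is that the positive unipotent Jacobi shear $\Phi_1$ lies entirely on one side of the degenerate stratum in $\Sp(2k)$: the Krein splitting of the eigenvalue $1$ of $\Phi_1(1)$ places all $k$ degenerate directions on the side that lowers the Conley-Zehnder index under perturbation. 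Consequently,
$$
-k \;\leq\; \MUCZ(\Phi'_1) \;\leq\; 0,
$$
and summing the two bounds produces the asserted inequalities.

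The main obstacle is precisely this asymmetric estimate on $\MUCZ(\Phi'_1)$. A naive use of the symmetric comparison~(\ref{eqnar:bounds_mean_CZ}) on the full $2n$-dimensional path would yield only $|\MUCZ - \D_\rho| < n$, which is weaker than the upper bound $\D_\rho+(n-k)$ claimed by the proposition. Producing the one-sided inequality requires analyzing the sign of the Jacobi shear -- each of its $k$ parabolic $2\times 2$ blocks is uniformly positive since $t\geq 0$ along the path -- and propagating that sign through the specific symplectic trivialization afforded by the stable one-forms $\alpha_j$. Once this is in place, the decomposition $E_1\oplus E_2$ reduces the proof to two essentially standard Conley-Zehnder computations on each symplectic factor.
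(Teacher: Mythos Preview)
The paper does not give its own proof of this proposition; it is quoted verbatim from~\cite{Gi:maslov}. Your outline is essentially the argument found there: the stable one-forms $\alpha_j$ furnish the symplectic splitting $E_1\oplus E_2$ along the orbit, leaf-wise flatness forces the $E_1$-block of $d\varphi_\rho^t$ to be the positive unipotent shear, and the one-sided estimate $-k\le\MUCZ(\Phi'_1)\le 0$ for non-degenerate perturbations of that shear is exactly what sharpens the symmetric comparison~(\ref{eqnar:bounds_mean_CZ}) to the claimed upper bound $\D_\rho+(n-k)$. The arithmetic you give, summing this with $|\MUCZ(\Phi'_2)-\D_\rho|<n-k$ on the transverse factor, reproduces both inequalities.

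One point is worth making explicit. Your assertion that the flow of $\rho$ preserves the splitting is correct, but only for the right choice of complement: take $T^\perp\Ff=\bigcap_j\ker\alpha_j$. The leaf-wise ``Reeb'' vectors $R_j\in T\Ff$ dual to the $\alpha_i$ generate the flow of $\rho$, and the stability hypothesis $\ker d\alpha_i\supset T\Ff$ gives $L_{R_j}\alpha_i=\iota_{R_j}d\alpha_i+d(\alpha_i(R_j))=0$, so this particular complement is genuinely invariant under $d\varphi_\rho^t$. With that choice the block decomposition of the linearized flow is exact rather than approximate, and the additivity of $\MUCZ$ for a nearby non-degenerate path follows directly, without the ``small correction of the trivialization'' you invoke.
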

\section{Proof of the Main Theorem}

\subsection{``Pinned" Action Selector} One of the tools used
in the proof of Theorem~\ref{theorem:main} is an action selector
defined for ``pinned" Hamiltonians. This tool was first introduced
in~\cite{Ke:pinned, KS} for a class of Hamiltonians and manifolds
which are somewhat different from those we work with. However, the
definition of the action selector is essentially the same. In this
section, we describe this action selector and a \emph{special orbit}
associated with it.

Let $W$ be a rational symplectic manifold and $U$ an open
neighborhood of the coisotropic submanifold $M$ of $W.$ Consider
$K\colon W \rightarrow \reals$ a compactly supported autonomous
Hamiltonian such that the neighborhood $U$ contains the support of
$K$, $\supp K,$ and $U$ is displaced by a Hamiltonian $H.$ We may
assume $H$ is non-negative with minimum value equal to zero. Suppose
that $K$ is constant on $M$ where it attains its maximum value $\max
K=:\lambda,$ the maximum value $\lambda$ is greater than $||H||$ and
that $K$ is strictly decreasing and $C^2$-close to $\lambda$ on a
small neighborhood of $M.$

Consider the quotient map $j_K\colon HF_n(K) \rightarrow
HF_n^{(\lambda-\delta,\lambda+\delta)}(K)$ and define the element
$[\max_K]\in HF_n^{(\lambda-\delta,\lambda+\delta)}(K)$ as
$$
[\mbox{max}_K]:= j_K([W])
$$
where the fundamental class $[W]$ is seen as an element of
$HF_n(K)$; recall Remark~\ref{rmk:novikov}.
\begin{Definition}[\emph{``Pinned" Action Selector}]\label{def:actionselector}For $\delta>0$ small and
$\alpha>\lambda +\delta,$ consider the inclusion map
$$
\iota_{\alpha}\colon HF_n^{(\lambda-\delta, \,\lambda+\delta)}(K)
\hook HF_n^{(\lambda-\delta, \,\alpha)}(K).
$$
Define
$$
c(K) :=\displaystyle\inf_{\delta >0} \inf\{ \alpha> \lambda + \delta
 \colon  \iota_{\alpha}([\mbox{max}_K])=0\}.
$$
\end{Definition}
We have $c(K)\in\CS(K)$ and $c(K)=\CA_{K}(\bx)$ for some capped
orbit $\bx$ which is called a \emph{special one-periodic orbit}.
\begin{Claim}
\label{claim:maxK_exact} There exists $\mathcal{N}\in
HF_{n+1}^{(\lambda + \delta, \,\infty)}(K)$ such that $\p
\mathcal{N}=[\max_{K}]$ where
$$
\p \colon HF^{(\lpd,\,\lpd + ||H||)}_{n+1}(K) \rightarrow
HF^{(\lmd,\,\lpd)}_n(K)
$$
is the \it{connecting differential} in the long exact
sequence~(\ref{eqnar:longexactsequence}) (with $a=\lmd,\;b=\lpd$ and
$c=\lpd + ||H||$).
\end{Claim}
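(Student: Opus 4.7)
By exactness of the long exact sequence~(\ref{eqnar:longexactsequence}) applied to the triple $a = \lmd$, $b = \lpd$, $c = \lpd + ||H||$, the existence of $\mathcal{N}' \in HF_{n+1}^{(\lpd,\, \lpd + ||H||)}(K)$ with $\partial \mathcal{N}' = [\max_K]$ is equivalent to the vanishing of the natural filtration-inclusion map $\iota\colon HF_n^{(\lmd,\,\lpd)}(K) \to HF_n^{(\lmd,\,\lpd + ||H||)}(K)$ on $[\max_K]$; this is precisely the bound $c(K) \leq \lpd + ||H||$ on the pinned action selector. The class $\mathcal{N} \in HF_{n+1}^{(\lpd,\,\infty)}(K)$ in the statement is then obtained by pushing $\mathcal{N}'$ forward along the natural inclusion of action windows. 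Thus the plan is to reduce the claim to proving $\iota([\max_K]) = 0$.

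To establish this vanishing I would use Floer continuation with the composition Hamiltonian $K\#H$. The key displacement observation is that every one-periodic orbit of $\varphi_{K\#H} = \varphi_K \circ \varphi_H$ must avoid $\supp K$: if $x_0 \in \supp K \subset U$ were such a fixed point, then $\varphi_H(x_0)$ would lie outside $\supp K$ (by displaceability), where $\varphi_K = \id$, so the fixed-point equation would reduce to $\varphi_H(x_0) = x_0 \in U$, contradicting that $\varphi_H$ displaces $U$. Hence every one-periodic orbit of $K\#H$ is actually a one-periodic orbit of $H$; and since $H \geq 0$ with $\min H = 0$, any such orbit with its trivial capping has action in $[0, ||H||]$.

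Because $K\#H = K + H\circ(\varphi_K^t)^{-1} \geq K$ pointwise, Example~\ref{example:linear_homotopy} furnishes linear monotone homotopies between $K$ and $K\#H$ inducing continuation maps shifting the action filtration by $||H||$ and by $0$, respectively. A commutative diagram relating these continuations to the projection $HF_n(K) \to HF_n^{(\lmd,\,\lpd + ||H||)}(K)$, combined with the total-homology continuation isomorphism $HF_n(K) \cong HF_n(K\#H)$, reduces the vanishing $\iota([\max_K]) = 0$ to the vanishing of the image of $[W] \in HF_n(K\#H)$ in a filtered Floer group of $K\#H$ whose action window sits above $||H||$. By rationality of $W$, the action spectrum of $K\#H$ is discrete; the Conley-Zehnder grading constrains which recappings contribute in degree $n$; and under any of the three hypotheses of Theorem~\ref{theorem:main} one can choose $\delta > 0$ small enough that the relevant window meets no capped $H$-orbit of index $n$.

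The main obstacle is this final case-by-case vanishing. Each of the three hypotheses is used to exclude non-trivially capped $H$-orbits of degree $n$ from the small window: negative monotonicity couples action and index shifts under recapping with opposite signs, preventing any recapping from landing in the window while preserving index $n$; the condition $e(M) < \hbar$ can be arranged to force $||H|| < \hbar$, which separates the translates $\hbar \Z + [0, ||H||]$ into disjoint bands that the small window avoids for suitable $\delta$; and $2n + 1 < 2N$ forces every nontrivial recapping to shift the Conley-Zehnder index by at least $2N > 2n + 1$, ejecting all nontrivial recappings from degree $n$.
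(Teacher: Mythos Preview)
Your reduction is correct and matches the paper exactly up to the point where you must show that the image of $[W]$ vanishes in $HF_n^{(\lmd,\,\lpd + ||H||)}(K\#H)$ (equivalently, in $HF_n^{(\lmd,\,\lpd + ||H||)}(H)$, via the isomorphism $\Theta$ induced by the monotone homotopy --- since displacement forces $\CS(K\#H)=\CS(H)$). The paper builds the same commutative diagram out of the monotone homotopies $\Phi$ and $\Psi$ and the long exact sequence.

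The gap is in your final step. You invoke the three hypotheses of Theorem~\ref{theorem:main} and argue case-by-case that the action window $(\lmd,\,\lpd + ||H||)$ contains \emph{no} capped $H$-orbit of Conley--Zehnder index $n$. This is both unnecessary and, as stated, incorrect. It is unnecessary because Claim~\ref{claim:maxK_exact} holds for any rational $W$, with no extra hypothesis: since $H\geq 0$ with $\min H = 0$, the spectral invariant of the fundamental class satisfies $c([W],H)\leq \int_0^1 \max_W H_t\,dt = ||H|| < \lambda - \delta$, hence $[W]$ is already represented in $HF_n^{(-\infty,\,\lmd)}(H)$ and its image under $j_H$ vanishes. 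This is exactly the one-line justification the paper gives (``Since $\lambda-\delta > ||H||$\ldots''), and the three hypotheses play no role here --- they enter only later, in Section~\ref{subsection:proof_thm}, to force non-triviality of the limiting loop.

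Your case analysis is also flawed on its own terms. Under the hypothesis $2n+1<2N$ you assert that every nontrivial recapping shifts the Conley--Zehnder index by at least $2N$; but a recapping by a sphere $A$ with $\langle c_1,A\rangle = 0$ and $\langle\omega,A\rangle\neq 0$ is nontrivial (cappings are identified only when \emph{both} pairings vanish), leaves the index unchanged, and moves the action by a nonzero multiple of $\hbar$. So index-$n$ capped orbits of $H$ can populate arbitrarily high action windows, and the filtered group need not vanish. What vanishes is only the image of $[W]$, and that follows from the spectral-invariant bound above, not from emptiness of the window.
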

\begin{proof}
For $\delta>0$ sufficiently small, namely such that $\lambda -\delta
> ||H||,$ consider the following commutative diagram:
\begin{equation*}
\xymatrix{
{}                                                      & {\HF^{(\lpd,\,\lpd + ||H||)}_{n+1}(K)} \ar[d]^{\p} \\
{}                                                      & {\HF^{(\lmd,\,\lpd)}_n(K)} \ar[d]^{\iota}\\
{\HF^{(\lmd - ||H||,\,\lpd)}_n(K)}  \ar[r]^{\Psi\circ\Phi} \ar[d]_{\Phi} & {\HF^{(\lmd ,\,\lpd + ||H||)}_n(K)}\\
{\HF^{(\lmd,\,\lpd + ||H||)}_n(K \# H)} \ar[ur]^{\Psi}
\ar[r]^{\Theta}    & {\HF^{(\lmd,\,\lpd + ||H||)}_n(H)} }
\end{equation*}
where $\iota$ is the inclusion and $\p$ is the \emph{connecting
differential} in the long exact
sequence~(\ref{eqnar:longexactsequence}) (with $a=\lmd,\;b=\lpd$ and
$c=\lpd + ||H||$). The maps $\Phi$ and $\Psi$ are induced by
monotone homotopies between $K$ and $K\# H$: the map $\Phi$ is
induced by the linear monotone increasing homotopy from $K$ to
$K\#H$ (recall that $H\geq 0$) where, in
Example~\ref{example:linear_homotopy}, $C=||H||;$ the map $\Psi$ is
induced by the linear monotone decreasing homotopy from $K\#H$ to
$K$ where, in~(\ref{eqnar:morphism}), $C=0.$

Since $\varphi_H$ displaces $\supp K,$ the one-periodic orbits of
$K\#H$ are exactly the one-periodic orbits of $H$ and moreover
$\CS(K\#H)=\CS(H);$ see~\cite{HZ}. Then the map $\Theta$ is an
isomorphism induced by a linear monotone homotopy between $K\#H$ and
$H$ due to the continuity property~(\ref{eqnar:homotopy_homology})
of filtered homology.

Note that the vertical part of the diagram, which consists of the
maps $\p$ and $\iota,$ is part of a long exact sequence as
in~(\ref{eqnar:longexactsequence}).

Consider the projection
$$
j_H\colon HF(H) \rightarrow HF^{(\lmd,\lpd + ||H||)}(H).
$$
and the image
$$
j_H([W])\in HF^{(\lmd,\lpd+ ||H||)}(H)
$$
of the class $[W]\in HF_n(H).$ Since
$$
\lambda - \delta >||H||,
$$
we have
$$
0= j_H([W]) \in \HF^{(\lmd,\,\lpd + ||H||)}_n(H)
$$
and hence
$$
\HF^{(\lmd,\,\lpd + ||H||)}_n(K) \ni\Psi\circ\Theta^{-1}\circ
j_H([W])=\iota([\mbox{max}_K])=0
$$
where the first equality follows from the fact that $j_H([W])$ is
equal to the image $\Theta \circ\Phi\circ j([W])$ of the class $[W]$
seen as an element of $HF_n(K)$ and the map $j$ is the projection
$$
j\colon HF_n(K) \rightarrow HF_n^{(\lmd-||H||,\lpd)}(K).
$$
Then
$$
0=[\mbox{max}_K]\in \HF^{(\lmd ,\,\lpd)}_n(K)
$$
and, since $\iota$ and $\p$ are part of a long exact sequence, it
follows that there exists $\mathcal{N} \in \HF^{(\lpd, \lpd +
||H||)}_{n+1}(K)$ such that
$$
\p \mathcal{N}=[\mbox{max}_{K}]\in \HF^{(\lmd ,\,\lpd)}_n(K).
$$
\end{proof}

Consider a small non-degenerate perturbation $K' \colon S^1\times W
\rightarrow W$ of $K$ with $\max K'=\lambda$ and such that
\begin{eqnarray}
\label{eqnar:pert_floerhomol}
HF_j^{(a_0,a_1)}(K):=HF_j^{(a_0,a_1)}(K')
\end{eqnarray}
with $a_0,a_1\not\in\CS(K),\CS(K')$; recall
definition~(\ref{eqnar:def_FHdeg}).

Consider the class $[\max_{K'}]:= j_{K'}([W]) \in
HF_n^{(\lambda-\delta,\lambda+\delta)}(K')$ and define
$$
c(K'):=\displaystyle\inf_{\delta >0} \inf\{ \alpha> \lambda + \delta
 \colon \iota_{\alpha}([\mbox{max}_{K'}])=0\}.
$$
where $\iota_{\alpha}\colon HF_n^{(\lambda-\delta,
\,\lambda+\delta)}(K') \hook HF_n^{(\lambda-\delta, \,\alpha)}(K')$
is the inclusion map. We have $c(K') \rightarrow c(K)$ as $K'
\rightarrow K$ and $c(K')=\CA_{K'}(\bx')$ for some capped orbit
$\bx'.$ A special one-periodic orbit $\bx'$ for $K'$ is obtained
explicitly the following way: by~(\ref{eqnar:pert_floerhomol}) and
Claim~\ref{claim:maxK_exact}, we obtain a class $[\bc']\in
HF_{n+1}^{(\lambda + \delta, \,\infty)}(K')$ such that
$\p[\bc']=[\max_{K'}]$. Within each chain $\bc'$ pick a capped orbit
with the largest action and then among the resulting capped orbits
choose a capped orbit $\bx'$ with the least action. Moreover, we
have $\MUCZ(\bx')=n+1.$

\begin{Remark}
\label{rmk:conn_orb} The orbit $\bx'$ does not have to be connected
with the constant orbit $(\gamma_p,u_p)$ by a Floer downward
trajectory. However, there exists a capped orbit $\by'$ with this
property and such that
$$
 \lambda \leq \CA_K(\by') \leq \CA_K(\bx').
$$
The orbit $\by'$ is given explicitly by the following construction:
take all chains $\bc'$ such that $\p[\bc']=[\max_K'].$ Within each
chain consider a capped orbit connected to $(\gamma_p,u_p)$ with the
least action and among these orbits consider one with the least
action, $\by'.$
\end{Remark}

For a Hamiltonian $K$ as above, consider a sequence $(K_j)$ such
that $K_j$ is as $K'$ above and $K_j\rightarrow K$ as
$j\rightarrow\infty.$ By the Arzela-Ascoli theorem, there exists a
subsequence of special one-periodic orbits $\bx_j$ which converges
to an orbit $\bx$ of $K$ which is called a \emph{special
one-periodic orbit} of $K.$ Recall that $c(K_j) \rightarrow c(K)$ as
$j\rightarrow\infty$ and $\MUCZ(\bx_j)=n+1.$

The following results give upper and lower bounds for the action of
a special one-periodic orbit.
\begin{Lemma}
\label{lem:actionbounds} For a special one-periodic orbit $\bx$ of
$K,$ we have the following action upper bound:
\begin{eqnarray}
\label{eqnar:action_upperbound} \CA_K(\bx) \leq  \lambda + ||H||.
\end{eqnarray}
\end{Lemma}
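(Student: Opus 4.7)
The plan is to derive the estimate directly from Claim~\ref{claim:maxK_exact} via the long exact sequence that appears in the proof of that claim. By the definition of the special one-periodic orbit, $\CA_K(\bx) = c(K)$, so it suffices to show $c(K) \leq \lambda + ||H||$.

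First I would fix $\delta > 0$ small enough that $\lmd > ||H||$, as in Claim~\ref{claim:maxK_exact}. The claim produces a class $\mathcal{N} \in HF^{(\lpd,\,\lpd + ||H||)}_{n+1}(K)$ with $\partial \mathcal{N} = [\max_K]$, where $\partial$ is the connecting homomorphism of the long exact sequence~(\ref{eqnar:longexactsequence}) with $a = \lmd$, $b = \lpd$, and $c = \lpd + ||H||$. Exactness at $HF^{(\lmd,\,\lpd)}_n(K)$ then forces the inclusion-induced map $\iota_\alpha\colon HF^{(\lmd,\,\lpd)}_n(K) \to HF^{(\lmd,\,\alpha)}_n(K)$ with $\alpha := \lpd + ||H||$ to send $[\max_K]$ to zero. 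By Definition~\ref{def:actionselector}, this gives $c(K) \leq \alpha = \lpd + ||H||$; letting $\delta \to 0^+$ yields $c(K) \leq \lambda + ||H||$, which is the desired bound.

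For a general (possibly degenerate) $K$, the special orbit $\bx$ is obtained as an Arzela-Ascoli limit of special orbits $\bx_j$ for a non-degenerate approximating sequence $K_j \to K$ with $\max K_j = \lambda$ preserved. Applying the above argument to each $K_j$ (whose hypotheses for Claim~\ref{claim:maxK_exact} are satisfied by construction) gives $\CA_{K_j}(\bx_j) = c(K_j) \leq \lambda + ||H||$, and passing to the limit via $\CA_{K_j}(\bx_j) \to \CA_K(\bx)$ closes the argument. There is no substantive new obstacle here: Claim~\ref{claim:maxK_exact} already carries all of the Floer-theoretic content, and Lemma~\ref{lem:actionbounds} simply translates the vanishing of $\iota_\alpha([\max_K])$ into an action upper bound, so the main step is really just unpacking the definition of $c(K)$ against the exactness of the sequence used in the diagram.
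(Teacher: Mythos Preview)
Your proposal is correct and follows essentially the same route as the paper: use Claim~\ref{claim:maxK_exact} to see that $\iota_\alpha([\max_K])=0$ for $\alpha=\lpd+||H||$, conclude $c(K)\leq\lambda+||H||$, and invoke $\CA_K(\bx)=c(K)$. The only cosmetic difference is that the paper cites the intermediate step $\iota([\max_K])=0$ from the \emph{proof} of Claim~\ref{claim:maxK_exact}, whereas you cite the \emph{statement} (existence of $\mathcal{N}$ with $\partial\mathcal{N}=[\max_K]$) and then recover vanishing via exactness; your added paragraph on the limiting argument through $K_j$ is harmless but unnecessary, since $c(K)$ and the carrier $\bx$ are already defined for degenerate $K$ in the paper's setup.
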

\begin{proof}
Since $\iota ([\max_K])=0$ (proved in Claim~\ref{claim:maxK_exact}),
$c(K)\leq \lambda + ||H||.$ By the definition of the ``pinned"
action selector, we have $c(K)\geq \lambda.$ Then the result follows
immediately from the fact that $\bx$ is a carrier of the action
selector $c.$
\end{proof}

\begin{Lemma}
\label{lem:actionbounds2} A capped loop $\bx$ as in Lemma
\ref{lem:actionbounds} satisfies
\begin{eqnarray}
\label{eqnar:action_lowerbound} \CA_K(\bx) - \lambda \geq \epsilon
\end{eqnarray}
where $\epsilon > 0$ is independent of $K.$
\end{Lemma}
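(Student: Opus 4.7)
The plan is to argue by contradiction: suppose there exists a sequence of Hamiltonians $K^{(j)}$ satisfying the hypotheses of Lemma~\ref{lem:actionbounds} (with $\max K^{(j)}=\lambda^{(j)}$) and associated special one-periodic orbits $\bx^{(j)}$ with $\CA_{K^{(j)}}(\bx^{(j)})-\lambda^{(j)}\to 0$, and extract a contradiction. The target $\epsilon$ will arise as the minimum of two uniform constants: the rationality constant $\hbar$ of $W$, and an energy threshold coming from Bolle's estimates for the stable coisotropic submanifold $M$.

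First I would show that each $\bx^{(j)}$ is represented by a genuinely non-constant loop. The non-degenerate approximants $\bx_j$ carry $\MUCZ(\bx_j)=n+1$; on the other hand, the Conley--Zehnder index of a capped constant orbit at a critical point of an autonomous Hamiltonian with small Hessian equals $n$ minus the Morse index of the critical point (an integer in $[-n,n]$), while recapping by a sphere class $A$ shifts the index by the even integer $-2\langle c_1,A\rangle$. Hence no capped constant can carry Conley--Zehnder index $n+1$, and $\bx^{(j)}$ is non-constant. The same argument applies to any degree $n+1$ carrier.

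Next, I invoke Remark~\ref{rmk:conn_orb} to produce a capped orbit $\by^{(j)}$ of $K^{(j)}$, lying in a degree $n+1$ chain, which is joined to a constant orbit $(\gamma_p,u_p)$ at $M$ by a Floer downward trajectory and satisfies $\lambda^{(j)}\leq\CA_{K^{(j)}}(\by^{(j)})\leq\CA_{K^{(j)}}(\bx^{(j)})$. By the preceding paragraph, $\by^{(j)}$ is non-constant as well. The energy of the connecting trajectory equals $\CA_{K^{(j)}}(\by^{(j)})-\lambda^{(j)}$, which tends to zero by assumption. On the other hand, in the tubular normal form of Proposition~\ref{prop:nbhd} and using the flat leaf-wise metric of Proposition~\ref{prop:metric}, Bolle's energy estimates from \cite{Bo:fr,Bo:en} supply a uniform positive lower bound $\epsilon_0$, depending only on the stable structure of $M$ in $W$, on the energy of any Floer trajectory that links a non-constant orbit of a sufficiently $C^2$-small autonomous Hamiltonian near $M$ to a constant orbit on $M$. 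This yields the desired contradiction.

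The main obstacle will be ensuring that sphere bubbling in the compactified Floer moduli space does not invalidate the energy bookkeeping, since $W$ is only rational, not aspherical, so Bolle's arguments cannot be transplanted verbatim. Rationality of $W$ resolves this: any non-trivial sphere bubble absorbs at least $\hbar$ of energy, so if the total energy of the trajectory is strictly below $\hbar$ no bubbling can occur and Bolle's estimate applies unchanged. Consequently the choice $\epsilon := \min(\hbar/2,\,\epsilon_0)>0$ is a uniform constant, depending only on $M$ and $W$ and independent of $K$, which establishes the claimed lower bound $\CA_K(\bx)-\lambda\geq\epsilon$.
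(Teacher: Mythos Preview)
Your overall strategy coincides with the paper's: pass to the auxiliary orbit $\by$ of Remark~\ref{rmk:conn_orb}, connected to a constant orbit on $M$ by a Floer trajectory, and run the dichotomy ``energy $\geq \hbar$'' versus ``energy $<\hbar$, so no bubbling and Bolle's crossing estimate from \cite{Bo:fr,Bo:en,Gi:coiso} applies'' to produce a uniform $\epsilon>0$. The paper does exactly this (phrased as a direct bound on the perturbed orbits $\by_j$ rather than by contradiction), invoking lemmas~6.2 and~6.4 of \cite{Gi:coiso}; it even writes $\epsilon=\max\{\hbar,d\}$ where $\min$ is clearly meant, so your $\min(\hbar/2,\epsilon_0)$ is the right formula.

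Your second paragraph, however, contains a genuine error. You argue that a trivially capped constant orbit has Conley--Zehnder index $n-m\in[-n,n]$ and that recapping shifts the index by an even integer, hence $n+1$ is unreachable. But $[-n,n]+2\Z=\Z$ once $n\geq 1$, so this parity reasoning excludes nothing; a constant orbit with Morse index $1$ and a recapping with $\langle c_1,A\rangle=-1$ already has index $n+1$. The paper does not isolate a non-constancy step at all: the cited lemmas bound from below the energy of any Floer trajectory for these radial Hamiltonians that must cross the shell around $M$, which is what is actually needed. If you want non-constancy of $\by$ explicitly, argue via action and index together: a constant orbit of the perturbation $K'$ with action in the window $(\lambda-\delta,\lambda+\delta)$ must carry trivial capping (any nontrivial recapping moves the action by at least $\hbar$), and with trivial capping its index is at most $n$, contradicting $\MUCZ=n+1$.
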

\begin{proof}
Consider a sequence $(K_j)$ as above. Let $u_j$ be a Floer downward
trajectory connecting the orbit $\by_j$ defined in
Remark~\ref{rmk:conn_orb} and the constant orbit $(\gamma_p,u_p).$
If $E(u_j)$ is below $\hbar$, then we may apply a similar argument
to that in lemmas~$6.2$ and~$6.4$ in~\cite{Gi:coiso} which draws
heavily from~\cite{{Bo:fr},{Bo:en}} and we obtain
$$
d< E(u_j) = \CA_{K_j}(\by_j) - \CA_{K_j} (\bg_p)
$$
where $d>0$ is independent of $K_j$. Define
$$
\epsilon := \max \{ \hbar, d\} >0.
$$
Then $E(u_j)=\CA_{K_j}(\by_j) - \CA_{K_j}(\bg_p) \geq \epsilon$ and,
since $\CA_{K_j}(\by_j) \leq \CA_{K_j}(\bx_j),$ it follows that
\begin{eqnarray}
\label{eqnar:action_lower_bound_pert} \CA_{K_j}(\bx_j) - \lambda
\geq \epsilon.
\end{eqnarray}
Then take~(\ref{eqnar:action_lower_bound_pert}) to the limit when $j
\rightarrow \infty$ and we obtain the desired result
$$
\CA_K(\bx) - \lambda \geq \epsilon.
$$
\end{proof}

\subsection{Proof of Theorem~\ref{theorem:main}}\label{subsection:proof_thm}
Fix $R$ such that $U_R=M\times B^k_R$ is defined by
Proposition~\ref{prop:nbhd}. Consider $\varepsilon>0$ small and
$0<r<R/2$. Assume $U_r$ is displaced by some Hamiltonian $H$ and
consider $\lambda> e(U_r).$ Let $\Klre\colon [0,R] \rightarrow
\reals$ be a smooth decreasing map such that
\begin{itemize}
\item $\Klre \geq 0$
\item $\Klre(0)= \lambda $
\item $\Klre $ is strictly decreasing and $C^2$-close to $\lambda$ on $[0, \varepsilon]$
\item $\Klre$ is concave on $[\varepsilon, 2\varepsilon]$
\item $\Klre$ is linear decreasing from $\lambda - \varepsilon $ to $\varepsilon$ on $[2\varepsilon, r-\varepsilon]$
\item $\Klre$ is convex on $[r-\varepsilon, r ]$
\item $\Klre \equiv 0$ on $[r,R].$
\end{itemize}
We also denote by $\Klre$ the Hamiltonian
$$
\Klre\colon W \rightarrow \reals
$$
defined by $\Klre(|p|)$ on $U_R$ and equal to zero outside $U_R$.

Fix $r$ and consider the family of functions $\Kle$ depending
smoothly on the parameters $\lambda$ and $\varepsilon$. These
Hamiltonians have the same properties as the Hamiltonian $K$ in the
previous subsection.

The key to the proof, as in~\cite{Gi:maslov}, is the following
result which gives the location of a sequence of special
one-periodic orbits $\bx_i.$
\begin{Lemma}[\cite{Gi:maslov}]
\label{prop:localize} There exists $\lambda > e(U_R)$ and a sequence
$\varepsilon_i \rightarrow 0$ such that a special one-periodic orbit
of $\Kle_i$ $\bx_i$ satisfies
$$
|p(x_i)|\in [\varepsilon_i, 2\varepsilon_i]
$$
where $ p=(p_1,\ldots,p_k)$ are the coordinates introduced in
Proposition~\ref{prop:nbhd}.
\end{Lemma}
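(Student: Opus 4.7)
I would follow the localization argument of \cite{Gi:maslov}, exploiting the piecewise structure of $\Kle$ together with the action sandwich $\lambda+\epsilon\leq \CA_{\Kle}(\bx)\leq \lambda+\|H\|$ provided by Lemmas \ref{lem:actionbounds} and \ref{lem:actionbounds2}. The plan is to fix $\lambda$ generic and just above $e(U_R)$, and then rule out every subinterval of $[0,R]$ other than $[\varepsilon,2\varepsilon]$ as the location of a special one-periodic orbit, for $\varepsilon$ along a well-chosen sequence.

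Since $\Kle$ depends only on $|p|$ inside $U_R$, every one-periodic orbit lies on a level set $\{|p|=c\}$ and, by Proposition \ref{prop:metric}, projects to a closed leaf-wise geodesic of length $|\Kle'(c)|$ in the flat metric $\sum_{j}\alpha_j^2$. Capping such an orbit by the radial cylinder down to the zero section, followed by a capping in $M$, a direct computation gives the action formula
$$
\CA_{\Kle}(\bx)=\Kle(c)+c\,\Kle'(c)+k\hbar
$$
for some $k\in\Z$ recording sphere recappings. I then analyze the five pieces of $[0,R]$ in turn. On $[r,R]$ one has $\Kle\equiv 0$, so orbits are constants with action in $\hbar\Z$, outside $(\lambda,\lambda+\|H\|]$ for generic $\lambda$. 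On the convex piece $[r-\varepsilon,r]$, both $|\Kle'(c)|$ and $\Kle(c)+c\Kle'(c)$ shrink with $\varepsilon$, placing the action below $\lambda+\epsilon$. On the flat piece $[0,\varepsilon]$, where $\Kle$ is $C^2$-close to the constant $\lambda$, the term $c\,\Kle'(c)$ is $o(1)$, so reaching action $\geq\lambda+\epsilon$ would require a sphere recapping contributing at least $\hbar$; the resulting combination is incompatible with the upper bound $\lambda+\|H\|$, the rationality of $W$, and the Conley--Zehnder constraint $\MUCZ(\bx')=n+1$ (which disagrees with the index of a near-maximum orbit) once $\lambda$ is taken close enough to $e(U_R)$.

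The remaining region is the linear piece $[2\varepsilon,r-\varepsilon]$, where the slope $-\Kle'(c)=(\lambda-2\varepsilon)/(r-3\varepsilon)$ is constant and a one-periodic orbit would require a closed leaf-wise geodesic of precisely this length on some leaf of some $M_p$ with $|p|\in[2\varepsilon,r-\varepsilon]$. The union of the leaf-wise length spectra over this family is countable, while the target length varies continuously and strictly monotonically in $\lambda$, so a Baire category argument produces a single $\lambda>e(U_R)$ together with a sequence $\varepsilon_i\to 0$ for which the linear region contains no one-periodic orbits. This generic-$\lambda$ step is the main obstacle, and it is the reason one obtains only a sequence $\varepsilon_i$ rather than all sufficiently small $\varepsilon$.
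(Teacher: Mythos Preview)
The paper does not supply a proof of this lemma; it cites \cite{Gi:maslov} and remarks that the same argument applies to the present family of Hamiltonians. Your sketch is therefore an attempt to reconstruct that argument, and its overall architecture --- eliminate each subinterval of $[0,R]$ as a possible location for the special orbit via a combination of action bounds, index constraints, and genericity in $\lambda$ --- is indeed the strategy of \cite{Gi:maslov}.

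Two of the exclusion steps do not work as written. On the convex piece $[r-\varepsilon,r]$ the slope $|K_{\lambda,\varepsilon}'(c)|$ does \emph{not} shrink with $\varepsilon$: it sweeps the entire interval $[0,(\lambda-2\varepsilon)/(r-3\varepsilon)]\approx[0,\lambda/r]$, so non-trivial one-periodic orbits are present there for every $\varepsilon$; and since the capping area of a non-trivial loop is not a multiple of $\hbar$, your $k\hbar$ bookkeeping does not bound the action. What actually separates the concave from the convex region is the Conley--Zehnder index of a non-degenerate perturbation, governed by the sign of $K_{\lambda,\varepsilon}''$ in the radial direction --- this is precisely why concavity (and not convexity) is invoked in the index identity used immediately after this lemma in the proof of Theorem~\ref{theorem:main}. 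For the linear piece, the assertion that the leaf-wise length spectrum is countable is unjustified: the characteristic foliation need not be a fibration, and there is no general reason for this set to be countable. The genericity argument in \cite{Gi:maslov} relies instead on a closed measure-zero (nowhere-dense) property of the relevant period set together with the Lipschitz dependence of the action selector on $\lambda$; filling in this step carefully is where the real content of the lemma lies.
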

\begin{Remark}
In~\cite{Gi:maslov}, the result of Lemma~\ref{prop:localize} is
proved for a class of Hamiltonians which is slightly different from
the one we work with. However the above lemma holds for the same
reasons as the result in the referred paper.
\end{Remark}
By Proposition~\ref{prop:metric}, if we reparametrize $\bx_i$ and
reverse its orientation, then $\bx_i$ can be viewed as a periodic
orbit $\bx_i^-$ of $\rho.$ Since the slopes of the Hamiltonians
$K_{\lambda, \varepsilon_i}$ are bounded from above (for instance,
by $2\lambda/ r$), then (by the Arzela-Ascoli theorem) we define
$$
\bg \colon= \mbox{limit of (a subsequence of)} \;(\pi(x_i^-),
\widehat{u_i}^-).
$$
where $\mu(\pi(x_i^-), \widehat{u_i}^-)= - \D_{\rho}(x_i^-,u_i^-)$
by~(\ref{eqnar:proj}). Then, by~(\ref{eqnar:bounds_mean_CZ}),
$$
-n\leq \MUCZ((x_i^-,u_i^-)') - \D(x_i^-,u_i^-)\leq n
$$
and hence
\begin{eqnarray*}
 - n \leq \mu(\pi(x_i^-), u_i^-)\quad + &\MUCZ((x_i^-,u_i^-)')& \leq n \\
&\scriptsize{\|}&\\
& - \MUCZ ((x_i,u_i)')& = -(n+1)
\end{eqnarray*}
where the first equality uses the fact that $x_i$ is in the region
where $\Kle_i$ is concave, i.e., where
$|p(x_i)|\in[\varepsilon_i,2\varepsilon_i]$ and we obtain the
following bounds for the Maslov index of
$(\pi(x_i^-),\widehat{u_i}^-)$:
\begin{eqnarray}
\label{eqnar:bounds_maslov} 1\leq \mu(\pi(x_i^-), u_i^-) \leq 2n+1.
\end{eqnarray}
Considering the limit (of a subsequence)
of~(\ref{eqnar:bounds_maslov}), we have
\begin{eqnarray}
\label{eqnar:bounds_maslov_limit} 1\leq \mu(\bg) \leq 2n+1.
\end{eqnarray}

By Proposition~\ref{prop:nbhd}, we obtain
\begin{eqnarray}
\label{eqnar:area}
A_{K_{\lambda,\varepsilon_i}}(\bx_i) &=& K_{\lambda,\varepsilon_i}(\bx_i) -\int_{u_i} \omega\nonumber\\
&=& K_{\lambda,\varepsilon_i}(\bx_i) -\int_{\hat{u}_i} \omega -
|p(x_i)|l(\pi(x_i))
\end{eqnarray}
where $\hu_i$ is constructed as in
section~\ref{subsection:stable_coiso}; see Figure~\ref{fig:u_hat}.

Moreover,
by~(\ref{eqnar:action_upperbound}),~(\ref{eqnar:action_lowerbound})
and~(\ref{eqnar:area}), we have
\begin{eqnarray}
\label{eqnar:beforelim} 0 < \eps \leq
K_{\lambda,\varepsilon_i}(x_i^-) -\int_{{\hat{u´}_i}^-} \omega -
|p(x_i^-)|l(\pi(x_i^-)) - \lambda \leq e(U_r).
\end{eqnarray}
Since $|p(x_i^-)|\in [\varepsilon_i,2\varepsilon_i],\;
\Kle_i(x_i^-)\in [\varepsilon_i, \lambda-\varepsilon_i]$ and the
sequence $l(\pi(x_i^-))$ is bounded (since the slope of $\Kle_i$ is
bounded), then, taking the limit (of a subsequence)
of~(\ref{eqnar:beforelim}), we obtain
\begin{eqnarray}
\label{eqnar:bounds_area} 0 < \eps\leq \mbox{Area}(\bg) \leq e(U_r).
\end{eqnarray}
Recall that $\eps$ is independent of $\varepsilon_i.$ Then, taking
$r>0$ sufficiently small, we have
$$
0< \mbox{Area}(\bg) \leq e(M) + \varepsilon.
$$

Hence, we have the desired bounds for the area of $\bg.$ To obtain
the Maslov index bounds as presented in the theorem (which go
beyond~(\ref{eqnar:bounds_maslov_limit})), we will first prove that
the orbit $\gamma$ is non-trivial. Assume the contrary, that is,
that $\gamma$ is a trivial orbit. Then,
by~(\ref{eqnar:bounds_area}), the capping $v$ of $\gamma$ must be
non-trivial. Recall that we have one of the following conditions:
        \begin{itemize}
        \item $W$ is negative monotone,
        \item $e(M)< \hbar,$
        \item $2n+1<2N$.
        \end{itemize}
Suppose that $W$ is negative monotone. Then, $\left< c_1,v \right>$
and Area($\bg$) have opposite signs. However,
by~(\ref{eqnar:bounds_maslov_limit}) and~(\ref{eqnar:bounds_area}),
they are both positive and we obtain a contradiction. If $e(M)<
\hbar$ or $2n+1<2N$, we obtain contradictions by the definition of
the rationality constant $\hbar$ and~(\ref{eqnar:bounds_area}) or by
the definition of the minimal Chern number $N$
and~(\ref{eqnar:bounds_maslov_limit}), respectively. Therefore,
$\gamma$ is a non-trivial orbit. Furthermore, there exists a
(sub)sequence of non-trivial orbits $x_i$ as in
Lemma~\ref{prop:localize} which converges to $\gamma$. Then, by
Proposition~\ref{prop:bound}, we have
\begin{eqnarray*}
-\mu(\pi(x_i^-), u_i^-) - n \leq &\MUCZ((x_i^-,u_i^-)')& \leq - \mu (\pi(x_i^-), \widehat{u_i}^-) + n-k\\
&\scriptsize{\|}&\\
& - \MUCZ ((x_i,u_i)')& = -(n+1)
\end{eqnarray*}
where the first equality uses the fact that $x_i$ is in the region
where $\Kle_i$ is concave, i.e., where
$|p(x_i)|\in[\varepsilon_i,2\varepsilon_i]$. Then
$$
1\leq \mu(\pi(x_i^-), \widehat{u_i}^-) \leq 2n+1-k
$$
and considering the limit (of a subsequence) we obtain the desired
bounds for the Maslov index of $\bg$:
$$
1\leq \mu(\bg) \leq 2n+1-k.
$$

\section{Nearby Existence Theorem} \label{section:nearby} The theorem given in this
section guarantees the existence of periodic orbits for a dense set
of energy levels in a certain class of rational symplectic
manifolds. This result is proved for symplectically aspherical
manifolds in~\cite{Gi:coiso}. The structure of our proof is
essentially the same as in the referred paper and the necessary
changes are contained in the proof of Theorem~\ref{theorem:main}.

Let $W$ be a closed rational symplectic manifold and consider a map
$\overrightarrow{F}=(F_1,\ldots,F_k)\colon W\rightarrow \reals^k$
whose components $F_j$ are Poisson-commuting Hamiltonians, i.e.,
$\{F_i,F_j\}=0$ for $i\not= j$ and satisfy $dF_1\wedge\ldots\wedge
dF_k \not= 0 $ in $M_0$ where $M_a:=\overrightarrow{F}^{-1}(\{a\}),$
for $a\in \reals^k,$ and $M_0$ is a displaceable coisotropic
submanifold of $W$ with codimension $k.$ Assume that one of the
following conditions is satisfied
        \begin{itemize}
        \item $W$ is negative monotone,
        \item $e(M_0)< \hbar$,
        \item $2n+1<2N$.
        \end{itemize}
Then we have the following nearby existence result.
\begin{Theorem}
For a dense set of regular values $a\in\reals^k$ near the origin,
the level set $M_a$ carries a closed curve $x$ (with capping $u$ in
$W$) tangent to the characteristic foliation $\Ff_a$ on $M_a$.
\end{Theorem}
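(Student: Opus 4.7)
The plan is to adapt the argument of section~\ref{subsection:proof_thm} to a parameterized family of ``pinned'' Hamiltonians, one centered at each regular value $a$ near the origin, and then extract density from a covering argument. Since the $F_j$'s Poisson-commute and $dF_1\wedge\cdots\wedge dF_k\neq 0$ on $M_0$, the joint Hamiltonian flow of $(F_1,\ldots,F_k)$ defines a free local $\reals^k$-action in a tubular neighborhood of $M_0$ transverse to the level sets $M_a$. Together with the Weinstein normal form of Proposition~\ref{prop:nbhd}, this identifies, after a diffeomorphism of the parameter, a neighborhood $U_R\cong M_0\times B^k_R$ in which $\omega=\omega_M+d(\sum p_j\alpha_j)$ and $M_a$ corresponds to $M_0\times\{a\}$; in particular each $M_a$ for $|a|$ small is displaced by the same Hamiltonian $H$ that displaces a neighborhood of $M_0$, and the leaf-wise geodesic flow of $\rho=|p|^2/2$ passes through every $M_a$.

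For each such $a$ and each small $\varepsilon>0$, I would build a profile Hamiltonian $K_{a,\lambda,\varepsilon}$ depending only on $|p-a|$ with exactly the qualitative shape of $\Klre$ in section~\ref{subsection:proof_thm}, but peaked on $M_a$: equal to $\lambda>e(U_r)$ in a small ball around $M_a$, strictly concave on the shell $\{|p-a|\in[\varepsilon,2\varepsilon]\}$, linearly decreasing, then convex, and zero outside $U_R$. The pinned action selector of Definition~\ref{def:actionselector}, together with Claim~\ref{claim:maxK_exact} and Lemmas~\ref{lem:actionbounds}--\ref{lem:actionbounds2}, produces a special capped one-periodic orbit $\bx_i$ with $\MUCZ(\bx_i)=n+1$ and
\[
\lambda+\epsilon\;\leq\;\CA_{K_{a,\lambda,\varepsilon_i}}(\bx_i)\;\leq\;\lambda+\|H\|,
\]
where $\epsilon>0$ can be chosen independently of both $\varepsilon_i$ and (small) $a$. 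The localization statement (the parameterized analogue of Lemma~\ref{prop:localize}) then places $\bx_i$ in the concave shell $|p(x_i)-a|\in[\varepsilon_i,2\varepsilon_i]$. By Proposition~\ref{prop:metric}, reversing orientation and reparametrizing turns $\bx_i$ into a periodic orbit of the leaf-wise geodesic flow sitting on the level set $M_{a_i^*}$ with $|a_i^*-a|\leq 2\varepsilon_i$. Passing to a subsequential Arzel\`a--Ascoli limit produces a capped closed curve $(\gamma,v)$ tangent to $\Ff_{a^*}$ on some $M_{a^*}$ with $|a^*-a|$ arbitrarily small; the three alternative hypotheses (negative monotonicity, $e(M_0)<\hbar$, or $2n+1<2N$) rule out the degenerate case that $\gamma$ is constant with non-trivial capping, by exactly the argument used at the end of section~\ref{subsection:proof_thm}.

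Density is then immediate: given any regular value $a$ sufficiently near the origin and any $\delta>0$, the above construction yields a closed characteristic on some $M_{a^*}$ with $|a^*-a|<\delta$; for $|a|$ small enough, $a^*$ is automatically a regular value, so the set of regular values $a\in\reals^k$ for which $M_a$ carries a closed characteristic is dense in a neighborhood of the origin.

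The hard part will be verifying uniformity in $a$ of the two quantitative inputs, namely the action-gap constant $\epsilon$ of Lemma~\ref{lem:actionbounds2} and the localization in the concave shell of Lemma~\ref{prop:localize}. Both ultimately rely on the Bolle-type energy estimates of~\cite{Bo:fr,Bo:en} (together with the rationality constant $\hbar$), but since the submanifolds $M_a$ sit inside the fixed neighborhood $U_R$ and depend smoothly on $a$, the relevant geometric quantities vary continuously and can be bounded uniformly for $a$ in a small ball around $0$. Once this uniformity is established, the rest of the argument is a verbatim reiteration of section~\ref{subsection:proof_thm}, applied level-set by level-set.
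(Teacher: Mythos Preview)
Your approach has a real gap: you invoke Proposition~\ref{prop:nbhd} to put a neighborhood of $M_0$ into the normal form $M_0\times B^k_R$ with $\omega=\omega_M+\sum d(p_j\alpha_j)$, but that proposition requires $M_0$ to be \emph{stable}, and the nearby existence theorem carries no such hypothesis. The existence of Poisson-commuting $F_j$ with independent differentials does not imply stability---already for $k=1$, every regular hypersurface is a level set of some function, yet generic hypersurfaces fail to be stable. Without the stable normal form none of the apparatus you import from section~\ref{subsection:proof_thm} is available: there is no leaf-wise flat metric, no Hamiltonian $\rho=|p|^2/2$ whose flow is the leaf-wise geodesic flow (Proposition~\ref{prop:metric}), no profile Hamiltonian $K_{a,\lambda,\varepsilon}(|p-a|)$ with the prescribed concave and convex shells, and hence no analogue of the localization Lemma~\ref{prop:localize}. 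The uniformity-in-$a$ programme you outline is therefore built on a foundation the theorem does not provide.

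The paper sidesteps all of this by a much more direct construction: take $K=f(F_1,\ldots,F_k)$ for a bump function $f$ on $\reals^k$ with small support near the origin and sufficiently large maximum. Since $K$ Poisson-commutes with every $F_j$, its Hamiltonian vector field $X_K=\sum_j(\partial_j f)(\overrightarrow{F})\,X_{F_j}$ is tangent to the characteristic foliation $\Ff_a$ on whichever level set $M_a$ it sits in, so \emph{any} nonconstant one-periodic orbit of $K$ is automatically a closed characteristic on some $M_a$ with $a\in\supp f$. The pinned action selector together with Lemmas~\ref{lem:actionbounds} and~\ref{lem:actionbounds2} then produces such an orbit and gives the action and mean-index bounds needed to exclude, under any of the three alternative hypotheses, the degenerate possibility of a constant orbit with nontrivial capping. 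No normal form, no shell localization, and no parameter-uniform estimates are needed; density follows simply by centering $f$ at an arbitrary regular value near $0$ and shrinking its support.
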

\begin{proof}
We prove the existence of an orbit (with the required properties) in
a level $M_a$ arbitrarily close to $M_0$ and the wanted result
follows immediately. Consider $K:=f(F_1,\ldots,F_k)$ where
$f\colon\reals^k \rightarrow \reals$ is a bump function supported in
a small neighborhood of the origin in $\reals^k$ and such that the
maximum value of $f$ is large enough. Since the support of $f$ is
small, we may assume that the support of $K$ is displaceable and all
$a\in \supp f$ are regular values of $\overrightarrow{F}.$ Hence the
coisotropic manifolds $M_a$ are compact and close to $M_0$ when
$a\in\reals^k$ is near the origin. By lemmas~\ref{lem:actionbounds}
and~\ref{lem:actionbounds2}, there exists a capped one-periodic
orbit of $K$ (in some regular level $M_a$) such that
\begin{eqnarray}
\label{eqnar:dense_action_bounds} \max K < A_K(\bx)\leq \max K +
||H||
\end{eqnarray}
where $H$ displaces $\supp K.$ The capped orbit $\bx$ can be
approximated by non-degenerate capped orbits with Conley-Zehnder
index equal to $n+1$ and hence, by~(\ref{eqnar:bounds_mean_CZ}), we
obtain
$$
1\leq \D(\bx)\leq 2n+1.
$$
Since one of the three conditions mentioned above is satisfied, the
orbit $x$ is non-trivial. Indeed, assume that $x$ is a trivial
orbit. Then~(\ref{eqnar:dense_action_bounds}) is equivalent to
$$
0<\mbox{Area}(\bx)\leq e(M).
$$
Then using the area and (mean) index bounds on $\bx$ and assuming
one of the above three conditions, we obtain a contradiction
(following the same reasoning as in
section~\ref{subsection:proof_thm}).

Furthermore, since the Hamiltonian $K$ Poisson-commutes with all
$F_j,$ the orbit $x$ is tangent to the characteristic foliation
$\Ff_a$ on $M_a$.
\end{proof}

\section{Appendix: The Coisotropic Maslov Index is Well Defined}
\label{section:coiso_maslov_index}

The objective of this section is to revisit the definition of the
coisotropic Maslov index and give a direct proof of the fact that it
is well defined. As mentioned in the introduction, similar notions
of index are originally considered in~\cite{Gi:maslov,Zi}.

First, we define the Maslov index of a loop of coisotropic subspaces
of $(\reals^{2n},\omega_0)$ where $\omega_0 := dx\wedge dy$ and
$(x,y)$ are the coordinates in $\reals^{2n}=\reals^n\times\reals^n.$
Then, we define the Maslov index of a capped loop lying in a
coisotropic submanifold and tangent to the characteristic foliation
of the coisotropic submanifold. We start by recalling the definition
of the mean index given in~\cite{SZ}. For its construction, we need
a collection of mappings given by the following theorem:
\begin{Theorem}[\cite{SZ}]
There is a unique collection of continuous mappings
$$
  \rho\colon\Sp(V,\omega) \rightarrow S^1
$$
(one for every symplectic vector space $V$) satisfying the following
conditions:
    \begin{itemize}
    \item {Naturality:} If $T\colon (V_1, \omega_1) \rightarrow (V_2, \omega_2)$ is a symplectic isomorphism (that is, $T^*\omega_2=\omega_1$), then
    $$\quad \rho(T\varphi T^{-1})=\rho(\varphi)$$
    for $\varphi \in \Sp(V_1, \omega_1)$.
    \item {Product:} If $(V, \omega)=(V_1 \times V_2, \omega_1\times \omega_2),$ then
    $$ \rho(\varphi)=\rho(\varphi_1)\rho(\varphi_2)$$
    for $\varphi\in \Sp(V,\omega)$ of the form $\varphi(z_1,z_2)=(\varphi_1z_1,\varphi_2z_2)$ where $\varphi_i\in \Sp(V_i, \omega_i)$.
    \item{Determinant:} If $\;\varphi\in \Sp(2n)\cap O(2n) \simeq U(n)$ is of the form
    $$
    \varphi= \left( \begin{array}{cc}
                                X & -Y \\
                                Y & X\end{array} \right),
    $$
    then
    $$\rho(\varphi)=\det(X+iY)$$
    \item{Normalization:} If $\varphi$ has no eigenvalues on the unit circle, then
    $$\quad\rho(\varphi)= \pm 1$$
    \end{itemize}
\end{Theorem}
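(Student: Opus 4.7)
\emph{Proof proposal.} The plan is to handle uniqueness and existence separately, with uniqueness serving essentially as a blueprint for the construction in existence.

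For uniqueness, I would argue as follows. Naturality lets us replace $\varphi\in\Sp(V,\omega)$ by any symplectic conjugate, so we may put $\varphi$ into a convenient symplectic normal form: up to conjugation, $\varphi$ is a direct sum of blocks falling into two families, those whose spectrum lies on the unit circle (the ``elliptic'' pieces) and those whose spectrum is disjoint from the unit circle (the ``hyperbolic'' pieces). The Product axiom then reduces $\rho(\varphi)$ to a product over these blocks. On an elliptic block that happens to lie in $\Sp(2m)\cap\operatorname{O}(2m)\simeq \U(m)$, the value of $\rho$ is forced by Determinant; a general elliptic block can be connected to such a unitary representative through the elliptic locus, along a path to which Determinant and continuity apply (after a further normal-form reduction within each elliptic eigenvalue stratum). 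On a hyperbolic block, Normalization forces $\rho=\pm1$, and continuity pins down the sign on each connected component. Thus any continuous $\rho$ satisfying the four axioms is determined block by block, hence globally.

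For existence, I would fix a compatible complex structure $J_0$ on $V$ (which trivializes $V\cong\C^n$) and use the polar-type decomposition of $\Sp(V,\omega)$: every $\varphi$ factors uniquely as $\varphi=U\cdot P$, where $U\in\Sp(V,\omega)\cap\operatorname{O}(V)\simeq\U(n)$ and $P$ is symmetric, symplectic and positive definite. Define
\[
\rho(\varphi):=\det{}_{\C}(U),
\]
where the complex determinant is computed with respect to $J_0$. Continuity is immediate, since polar decomposition is continuous and the positive-definite symplectic factor lives in a contractible space. Independence of $J_0$ (and hence Naturality) follows because any two $\omega$-compatible complex structures are conjugate by an element of $\Sp(V,\omega)$, and such a conjugation changes $U$ only by an inner automorphism of $\U(n)$, leaving $\det_\C(U)$ invariant. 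The Product axiom is immediate because the polar decomposition splits across the direct sum $V_1\times V_2$. The Determinant axiom is tautological: if $\varphi\in\Sp(2n)\cap\operatorname{O}(2n)$ then $P=I$, $U=\varphi$ and one reads off $\det_\C(X+iY)$.

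The main obstacle is verifying the Normalization axiom for this $\rho$. One must show that whenever $\varphi$ has no eigenvalues on the unit circle, $\det_\C(U)\in\{+1,-1\}$, even though $U$ itself will generally not equal $\pm I$. The strategy will be to use the fact that the hyperbolic locus $\Sp^*(2n)\subset\Sp(2n)$ has finitely many connected components, each containing a simple normal-form representative (for instance block-diagonal sums of $2\times 2$ blocks $\mathrm{diag}(e^t,e^{-t})$ and $4\times 4$ complex hyperbolic blocks). Continuity of $\rho$ forces it to be constant on each component, and the explicit polar decomposition of these simple representatives, carried out block by block, will yield $\det_\C(U)=\pm 1$ in each case. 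Once Normalization is established, the four axioms are in place, and uniqueness from the first paragraph shows that this construction is the unique one.
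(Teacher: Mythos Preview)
The paper does not itself prove this theorem: it is quoted from \cite{SZ}, and the only additional content is the explicit eigenvalue formula recorded in Remark~\ref{rmk:def_rho}. Your uniqueness outline is reasonable and follows the standard normal-form reduction. The existence construction, however, is wrong.

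The map $\rho(\varphi):=\det_{\C}(U)$, with $U$ the unitary factor in the polar decomposition $\varphi=UP$, satisfies Product and Determinant but fails both Naturality and Normalization. Take
\[
\varphi=\begin{pmatrix}2&1\\0&1/2\end{pmatrix}\in\Sp(2),
\]
which has eigenvalues $2,\,1/2$ off the unit circle; the correct value (by Remark~\ref{rmk:def_rho}, with $m_0=0$) is $\rho(\varphi)=+1$. But $\varphi$ is not symmetric, so its unitary part is a nontrivial rotation $R_\theta$ and $\det_{\C}(U)=e^{i\theta}\notin\{\pm1\}$. Since $\varphi$ is symplectically conjugate to $\mathrm{diag}(2,1/2)$ (whose unitary part is the identity), Naturality fails as well. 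The root cause is that polar decomposition depends on the inner product, which is not $\Sp$-invariant; your argument that $J_0$-independence would imply Naturality is formally correct, but $J_0$-independence itself is never established and is in fact false, as this example shows. Likewise, your Normalization strategy (``constant on each hyperbolic component by continuity'') presupposes that $\rho$ already lands in the discrete set $\{\pm1\}$ there, which is precisely what you are trying to prove.

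What you have written down is the extension of $\det_{\C}$ along the deformation retraction $\Sp(2n)\to\U(n)$; it is homotopic to the Salamon--Zehnder $\rho$ and induces the same isomorphism on $\pi_1$, but it does not agree with it pointwise. The actual construction in \cite{SZ} is the one indicated by Remark~\ref{rmk:def_rho}: one \emph{defines} $\rho$ through the spectrum (unit-circle eigenvalues weighted by the Krein-type multiplicities $m_+$, together with the sign $(-1)^{m_0}$ coming from negative real eigenvalues), checks the algebraic axioms directly from the formula, and then proves continuity---which is where the genuine work lies.
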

\begin{Remark} \label{rmk:def_rho} The map $\rho\colon \Sp(2n) \rightarrow S^1$ is given explicitly by
$$
\rho(\varphi) := (-1)^{m_0} \displaystyle\prod_{\lambda \in
\sigma(\varphi)\cap S^1\backslash\{-1,1\}} \lambda^{m_+{(\lambda)}}
$$
where $\sigma(\varphi)$ is the set of eigenvalues of $\varphi,\;m_0$
is given by
$$
m_0 := \# \{\{\lambda, \lambda^{-1}\}\colon \lambda \in
\sigma(\varphi)\cap \mathbb{R}^- \}
$$
and $m_+(\lambda)$ is some multiplicity assigned to an eigenvalue
$\lambda\in S^1\backslash \{-1,1\};$ see page $1316$ in~\cite{SZ}
for the details of the definition of $m_+$.

Notice that only the eigenvalues of $\varphi$ on the unit circle and
on the negative real axis contribute to $\rho(\varphi)$.
\end{Remark} Then, the definition of the mean index of a path
$\Psi\colon[0,1]\rightarrow \Sp(2n)$ is given by:
\begin{Definition}[\emph{Mean Index}; \cite{SZ}]
\label{def:mean_index} Let $\Psi\colon[0,1]\rightarrow \Sp(2n)$ be a
path of symplectic matrices. Then choose a function
$\alpha\colon[0,1] \rightarrow \reals$ such that $\rho(\Psi_t) =
e^{\pi i\alpha(t)}.$ The \emph{Mean index} of the path $\Psi$ is
defined by
$$
\D (\Psi) := \alpha(1)-\alpha(0)
$$
\end{Definition}
The mean index $\D$ has the following properties:
\begin{enumerate}
\item Homotopy Invariance: $\D(\Psi)$ is an invariant of homotopy of $\Psi$ with fixed end points
\item \label{propertiesMeanIndex_concatenation} Concatenation: $\D$ is additive with respect to
concatenation of paths: $$\D(\Psi)=\D(\Psi|_{[0,a]}) +
\D(\Psi|_{[a,1]})$$ where $0<a<1$
\item \label{propertiesMeanIndex_loops} Loop: $\D(\varphi\Psi)=\D(\varphi)+\D(\varphi_0\Psi)$
if either $\varphi$ or $\Psi$ is a loop
\item \label{propertiesMeanIndex_conjinv} Naturality: $\D(T\Psi T^{-1})= \D(\Psi)$
where $T\colon (V_1,\omega_1)\rightarrow (V_2,\omega_2)$ is a
symplectic isomorphism and $\Psi\in\Sp(V_1, \omega_1)$
\item \label{propertiesMeanIndex_product} Product: $\D(\Psi)=\D(\Psi_1)+\D(\Psi_2)$
where $\Psi\in \Sp(V=V_1\times V_2,\omega=\omega_1\times \omega_2)$
is given by $\Psi(z_1,z_2)=(\Psi_1z_1,\Psi_2z_2)$ where $\Psi_i\in
\Sp(V_i, \omega_i).$
\end{enumerate}
The Maslov index of a loop of coisotropic subspaces is given (up to
a sign) as the mean index of a certain path of symplectic matrices.
\begin{Definition}[\emph{Maslov Index for Coisotropic Subspaces}; cf.\cite{Zi}]
\label{def:Maslov_Index} Consider $$\Cc=(\Cc_t)_{t\in[0,1]}$$ an
oriented loop of coisotropic subspaces of $(\reals^{2n}, \omega_0)$
and $$H_t \colon \Cc_0 / \Cc_0^{\omega_0} \rightarrow \Cc_t /
\Cc_t^{\omega_0}$$ a path of symplectic linear maps. Recall that a
loop $\Cc$ is oriented if one can orient the space $\Cc_t$
(continuous in $t$) so that $\Cc_0$ and $\Cc_1$ have the same
orientation. Pick a path
\begin{eqnarray}
\label{eqnar:prop_sympl_path} \Psi \colon [0,1] \rightarrow \Sp(2n)
\quad \mbox{satisfying} \quad \Psi_0=Id, \; \Psi_t(\Cc_0)=\Cc_t \;
\mbox{and} \; {\Psi_t}{\Big{|}}_{C_0 / C_0^{\omega}} = H_t
\end{eqnarray}
and define the real valued index $\mu\colon \mathfrak{C} \rightarrow
\reals$ by
$$\mu(\mathcal{C},H):=-\D(\Psi),$$
where $\mathfrak{C}$ is the set of loops of coisotropic subspaces of
$(\reals^{2n}, \omega_0).$

If the loop $\Cc$ is not oriented, we define the Maslov index
$\mu(\Cc,H)$ as half of the Maslov index of the loop obtained by
traversing the initial loop twice.
\end{Definition}
\begin{Proposition}
\label{prop:Maslov_welldefined} The Maslov index given in
Definition~\ref{def:Maslov_Index} is well defined.
\end{Proposition}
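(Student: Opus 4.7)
The plan is to show that for any two choices $\Psi,\Psi'$ of symplectic path satisfying~(\ref{eqnar:prop_sympl_path}), one has $\D(\Psi) = \D(\Psi')$. First I would form the pointwise ratio $B_t := \Psi_t^{-1}\Psi'_t$, which is a smooth path $B\colon[0,1]\to\Sp(2n)$ with $B_0 = \mathrm{Id}$ taking values in the stabilizer subgroup
\[
G := \bigl\{A \in \Sp(2n) : A\Cc_0 = \Cc_0 \text{ and } A|_{\Cc_0/\Cc_0^{\omega_0}} = \mathrm{Id}\bigr\}.
\]
By continuity from $B_0 = \mathrm{Id}$, the path $B$ lies in the identity component $G^+ \subset G$, and the problem reduces to showing that modifying $\Psi$ by such a $B$ leaves the mean index invariant.

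Next I would analyze the structure of $G$ via a symplectic splitting $\reals^{2n} = V \oplus W \oplus V'$, where $V = \Cc_0^{\omega_0}$, $W \subset \Cc_0$ is a symplectic complement of $V$ (so $W \cong \Cc_0/\Cc_0^{\omega_0}$), and $V'$ is a Lagrangian complement of $V$ inside $W^{\omega_0}$. Relative to this splitting, every $A \in G$ is block upper-triangular with diagonal blocks $\alpha \in GL(V)$, $\mathrm{Id}_W$, and $(\alpha^*)^{-1}$ on $V'$ (the adjoint taken with respect to the nondegenerate pairing $\omega_0\colon V \times V' \to \reals$); the subgroup $G^+$ is singled out by $\det\alpha > 0$. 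In particular, the eigenvalues of any $A \in G^+$ consist of $1$ with multiplicity $2n - 2k$, together with the eigenvalues of $\alpha$ and their reciprocals.

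The core step is to verify that $\rho(A) = 1$ for every $A \in G^+$, using the explicit formula of Remark~\ref{rmk:def_rho}. For each eigenvalue $\lambda$ of $\alpha$ lying on $S^1 \ssminus \{\pm 1\}$, the paired value $1/\lambda$ also appears in $\sigma(A) \cap S^1$; a direct computation of the Hermitian form $Q(v) := -i\,\omega_0(v,\bar v)$ on the $\lambda$-generalized eigenspace of $A$---exploiting that $V$ and $V'$ are dual Lagrangians under $\omega_0$---will yield $m_+(\lambda) = m_+(1/\lambda)$, so the contributions $\lambda^{m_+(\lambda)}(1/\lambda)^{m_+(1/\lambda)}$ cancel to $1$. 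For eigenvalues on $\reals^-$, the same reciprocal pairing forces the total multiplicity---and hence $m_0$---to be even, giving $(-1)^{m_0} = 1$. Combined, this yields $\rho(A) = 1$ throughout $G^+$.

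Once $\rho \equiv 1$ on $G^+$, the continuous argument lift of $t \mapsto \rho(B_t)$ is identically zero, so $\D(B) = 0$; more generally, $\D$ vanishes on every continuous loop in $G^+$. To conclude $\D(\Psi) = \D(\Psi')$, I would extend $B$ to a based loop at $\mathrm{Id}$ on $[0,2]$ by appending a path in $G^+$ from $B_1$ back to $\mathrm{Id}$ (possible since $G^+$ is path-connected), extend $\Psi$ by the constant $\Psi_1$ on $[1,2]$, and then combine the Concatenation, Loop, and Naturality properties of $\D$ listed in Section~\ref{section:coiso_maslov_index} with the vanishing of $\D$ on $G^+$-loops. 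The main obstacle is the signature analysis behind $\rho \equiv 1$ on $G^+$, in particular the bookkeeping of $m_+(\lambda)$ and $m_0$ when eigenvalues coincide or cross $\pm 1$; once this is in hand, the rest is a formal consequence of the listed properties of $\D$.
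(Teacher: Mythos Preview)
Your core computation---showing $\rho \equiv 1$ on the identity component $G^+$ of the stabilizer via the signature identity $m_+(\lambda) = m_+(\bar\lambda)$ and the parity count of negative real eigenvalues---is exactly the content of the paper's Step~1 (the Lagrangian case), and your block description of $G$ matches the paper's Step~2. So the heart of the argument is the same.

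The gap is in your final reduction. After extending $B$ to a loop $\Gamma$ in $G^+$ on $[0,2]$ and $\Psi$ by the constant $\Psi_1$, the Loop and Concatenation properties give
\[
\D(\Psi) \;=\; \D(\tilde\Psi\,\Gamma) \;=\; \D(\Psi') + \D\bigl(\Psi_1\cdot\Gamma|_{[1,2]}\bigr),
\]
and you need the last term to vanish. But $\rho\equiv 1$ on $G^+$ does \emph{not} by itself force $\rho(\Psi_1\Gamma_s)$ to be constant in $s$: $\rho$ is not multiplicative, and $\Psi_1\Gamma_s$ is not a conjugate of $\Gamma_s$, so Naturality does not apply either. (Concretely, already for $n=1$ with $\Cc_0$ a Lagrangian line, $\Psi_1$ a rotation, and $\Gamma_s$ a shear, $\rho(\Psi_1\Gamma_s)$ moves on $S^1$.) What actually makes this term vanish is the stronger pointwise statement: for \emph{any} $A$ preserving $\Cc_0$ with induced map $h$ on $\Cc_0/\Cc_0^{\omega_0}$ and orientation-preserving action on $\Cc_0^{\omega_0}$, one has $\rho(A)=\rho(h)$. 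This is what the paper's Steps~1--2 are really establishing, and it follows from your eigenvalue analysis applied to the full stabilizer (not just to $G^+$, where $h=\mathrm{Id}$); once you have it, $\rho(\Psi_1\Gamma_s)=\rho(H_1)$ is constant and the extra term is zero.

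The paper sidesteps this issue in Step~3 by choosing a fixed reference \emph{loop} $\Phi_t\in\Sp(2n)$ with $\Phi_t(\Cc_0)=\Cc_t$ and writing $\tilde\Psi=\Phi^{-1}\Psi$; since $\Phi$ is a genuine loop, the Loop property applies cleanly and reduces everything to the constant-loop case. Either fix works, but your sketch as written does not yet close the argument.
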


\begin{proof}
We prove this proposition in three steps by considering the
following cases:
\begin{enumerate}
\item $\Cc$ is the constant loop where $\Cc_t=L_0$ is a fixed Lagrangian subspace of $(\reals^{2n},\omega_0)$
\item $\Cc$ is the constant loop where $\Cc_t=\Cc_0$ is a fixed coisotropic subspace of $(\reals^{2n},\omega_0)$
\item General case: $\Cc$ is a loop of coisotropic subspaces of $(\reals^{2n},
\omega_0)$.\\
\end{enumerate}

Step $1$: Assume, without loss of generality, that $\Cc$ is the
constant \emph{horizontal loop} $L_0:=\{ (x,y)\in \reals^{2n}\colon
y=0\}.$ Then consider $\Psi\colon [0,1]\rightarrow \Sp(2n)$ as
in~(\ref{eqnar:prop_sympl_path}) and notice that since $\Cc_t=L_0$
is Lagrangian, $H\equiv0$. For $t\in[0,1],$ we have that $\Psi_t$
fixes the lagrangian $L_0$ if and only if it is of the form
$$
\left( \begin{array}{cc}
A_t & B_t \\
0 & A_t^{-T}\end{array} \right) \mbox{ where }
B_t^TA_t^T=A_t^{-1}B_t.
$$
This path is homotopic to the concatenation of two symplectic paths
of the form:
$$
\Psi^{'}_t=\left( \begin{array}{cc}
\tilde{A_t} & 0 \\
0 & \tilde{A_t}^{-T}\end{array} \right)\quad\mbox{ and
}\quad\Psi^{''}_t=\left( \begin{array}{cc}
\tilde{A_1} & \tilde{B_t} \\
0 & \tilde{A_1}^{-T}\end{array} \right)
$$
where we essentially first travel along $\Psi_t$ with $B_t=0$ and
then, when we reach
$$\left( \begin{array}{cc}
A_1 & B_0=0 \\
0 & A_1^{-T}\end{array} \right), $$ we build up $B_t$ from $0$ to
$B_1.$

Since $\Psi^{''}_t$ has constant eigenvalues, $\D(\Psi^{''})=0$.
Hence, by property~(\ref{propertiesMeanIndex_concatenation}), the
mean index of $\Psi$ is equal to the mean index $\Psi^{'}$.

Suppose that $\tilde{A_t}$ is diagonalizable, i.e., it can be
written in the form
\begin{eqnarray}
\label{assumption_diagonalizable} \tilde{A_t}=P_t \underbrace{\left(
\begin{array}{ccc}
{(A_1)}_t &  & 0 \\
 & \ddots  & \\
 0 & & {(A_n)}_t\end{array} \right)}_{=\colon D_t} (P_t)^{-1}
\end{eqnarray}
where $P_t\in \Sp(2n)$ and each block $(A_j)_t$ corresponds to an
eigenvalue $(\lambda_j)_t$ of $\tilde{A}_t$. Then, in this case,
$$\left( \begin{array}{cc}
\tilde{A_t} & 0 \\
0 & \tilde{A_t}^{-T}\end{array} \right) = \left( \begin{array}{cc}
P_t & 0 \\
0 & P_t^{-T}\end{array} \right) \underbrace{\left( \begin{array}{cc}
D_t & 0 \\
0 & D_t^{-T}\end{array} \right)}_{=\colon\Gamma_t} \left(
\begin{array}{cc}
P_t & 0 \\
0 & P_t^{-T}\end{array} \right)^{-1}$$ and, by the naturality
property of the map $\rho,$ we have
$\rho(\Psi_t^{'})=\rho(\Gamma_t)$ for all $t\in [0,1].$
\begin{Claim}
For all $t\in[0,1],$ we have $\rho(\Gamma_t)=1$.
\end{Claim}

\begin{proof}
For the sake of simplicity, we will drop, for now, the subscript $t$
in the notation. By Remark~\ref{rmk:def_rho}, we have
\begin{eqnarray}\label{eqn:rho}
\rho(\Gamma)&:=& (-1)^{m_0} \hspace*{-0.5cm}\displaystyle\prod_{\tiny{\lambda \in \sigma(\Gamma)\cap S^1\backslash\{-1,1\}}} \hspace*{-0.7cm} \lambda^{m_+{(\lambda)}}\nonumber \\
&=&(-1)^{m_0} \hspace*{-0.5cm} \displaystyle\prod_{\substack{\lambda \in \sigma(\Gamma)\cap S^1\backslash\{-1,1\}\\ \footnotesize{\mbox{Im} \lambda >0}}} \hspace*{-0.7cm}\lambda^{m_+{(\lambda)}}\; \bar{\lambda}^{m_+{(\bar{\lambda})}} \nonumber\\
&=&(-1)^{m_0} \displaystyle\prod_{\substack{\tiny{\lambda \in
\sigma(\Gamma)\cap S^1\backslash\{-1,1\}} \\ \tiny{\mbox{Im} \lambda
>0}}}  \lambda^{m_+{(\lambda)}-m_+{(\bar{\lambda})}}
\end{eqnarray}
where $\sigma(\Gamma)$ is the spectrum of $\Gamma.$ Recall that only
the eigenvalues of $\Gamma$ on the unit circle and on the negative
real axis contribute to $\rho(\Gamma)$. Regarding the eigenvalues on
$S^1,$ it can be proved, directly from the definition of $m_+,$ that
$m_+(\lambda)=m_+(\overline{\lambda}).$ Hence, using the notation
with the subscript $t$, we obtain by~(\ref{eqn:rho}) that
$\rho(\Gamma_t)=(-1)^{(m_0)_t},$  for each $t\in [0,1],$ where
$$(m_0)_t:= \# \{\{\lambda_t, \lambda_t^{-1}\} \in \sigma(\Gamma_t)\colon
\lambda_t \in \mathbb{R}^-\} = \# \{\lambda_t \in \sigma(D_t)\colon
\lambda_t \in \mathbb{R}^-\}.$$ The last equality follows from the
fact that $\lambda_t$ is an eigenvalue of $D_t$ if and only if
$\lambda_t$ and $\lambda_t^{-1}$ are eigenvalues of $\Gamma_t.$
Since $D_t$ is continuous in $t$ and $\det (D_t) \not=0$, the signs
of $\det(D_0)$ and $\det(D_1)$ are the same. The determinant of
$D_t$ is given by
$$
\det(D_t) = \displaystyle\prod_{\lambda_t\in\reals^-} \lambda_t \;
\underbrace{\displaystyle\prod_{\lambda_t\in\reals^+}
\lambda_t}_{>0} \;\underbrace{\displaystyle\prod_{\lambda_t\in\C
\backslash \reals} \lambda_t}_{>0}
$$
where the products run over $\lambda_t\in\sigma(D_t).$ Then the sign
of $\det(D_t)$ is determined by the number (mod 2) of the real
negative eigenvalues of $D_t$ and we have
$(-1)^{(m_0)_0}=(-1)^{(m_0)_t}$ for all $t\in [0,1].$ Since, by
(\ref{eqnar:prop_sympl_path}) $D_0=Id$ the result follows
immediately.
\end{proof}
Hence, we have proved that, under the
assumption~(\ref{assumption_diagonalizable}), $\rho(\Psi_t^{'})=1$
for a fixed $t\in [0,1].$ Since the set of diagonalizable matrices
is dense in the set of matrices, the result holds for a ``general"
$\Psi_t^{'}.$ It follows that $\D(\Psi^{'})=0$ and hence we have $\D(\Psi)=0.$\\

Step $2$: Consider $\Psi\colon[0,1]\rightarrow \Sp(2n)$ as
in~(\ref{eqnar:prop_sympl_path}) and the symplectic decomposition of
$\reals^{2n}$:
\begin{eqnarray}\label{eqnar:sympldecom}
\reals^{2n}=(\reals^{2n}/\Cc_0 \oplus
\Cc_0^{\omega_0})\oplus\Cc_0/\Cc_0^{\omega_0}.
\end{eqnarray}

Since $\Psi_t\in\Sp(2n),\;\Psi_t(V)=V$ and $\Psi_t(\Cc_0 /
\Cc_0^{\omega_0})=\Cc_0 / \Cc_0^{\omega_0},$ the path $\Psi_t$ has
the form
$$\left[ \begin{array}{cc}
        (\Psi_t)|_V  & 0\\
        0  & H_t
        \end{array} \right]$$
with respect to decomposition~(\ref{eqnar:sympldecom}), where $V
:=\reals^{2n}/\Cc_0\oplus\Cc_0^{\omega_0}.$ By
property~(\ref{propertiesMeanIndex_product}) of the mean index,
$$\D(\Psi)=\D(\Psi |_V)+\D(H).$$
Since $V$ is symplectic and $\Cc_0^{\omega_0}$ is Lagrangian in $V,$
we have by step $1$ that $\D(\Psi|_V)=0$ and hence $\D(\Psi)=\D(H).$
Therefore, the mean index $\D(\Psi)$ only depends on the mean index
of $H$ and the result is
proved for case (2).\\

Step $3$: Let $\Psi\colon[0,1] \rightarrow \Sp(2n)$ be a path as
in~(\ref{eqnar:prop_sympl_path}) and consider a loop
$\Phi\colon[0,1]\rightarrow \Sp(2n)$ which depends only on $\Cc$ and
satisfies $\Phi_t(\Cc_0)=\Cc_t.$ Define the path
$\tilde\Psi\colon[0,1]\rightarrow\Sp(2n)$ by $\tilde\Psi_t
:=\Phi^{-1}_t\Psi_t$ which satisfies $\tilde\Psi_t(\Cc_0)=\Cc_t$ for
all $t\in [0,1].$ By step $2$, $\D(\tilde\Psi)=\D(\tilde{H}),$ where
$\tilde{H_t}\colon \Cc_0 / \Cc_0^{\omega_0} \rightarrow \Cc_t /
\Cc_t^{\omega_0}$ is given by
$$\tilde{H_t}= {\Phi^{-1}_t}{\big{|}_{(\Cc_t / \Cc_t^{\omega_0})}}{\Psi_t}{\big{|}_{(\Cc_0 / \Cc_0^{\omega_0})}}=
 {\Phi^{-1}_t}{\big{|}_{(\Cc_t / \Cc_t^{\omega_0})}}H_t.$$
Since $\Phi$ is a loop, then by
property~(\ref{propertiesMeanIndex_loops}) of the mean index we have
$\D(\tilde\Psi)=\D(\Phi^{-1}\Psi)=\D(\Phi^{-1})+\D(\Psi)$ and
$\D(\tilde{H})=\D\big{(}\Phi^{-1}{\big{|}_{(\Cc_0
/\Cc_0^{\omega})}}\big{)} + \D(H).$ Hence
$$\D(\Psi)=\D(\tilde{H})-\D(\Phi^{-1})$$ which only depends on $H$
and on $\Phi.$ Since $\Phi_t$ only depends on $\Cc_t,\; \D(\Psi)$
only depends on $H$ and $\Cc.$ Therefore, the Maslov index
$\mu(\Cc,H) :=-\D(\Psi)$ depends only on the loop $\Cc=(\Cc_t)$ and
the linear map $H$ and not on the choice of the path $\Psi$ as long
as it satisfies the properties in~(\ref{eqnar:prop_sympl_path}).
\end{proof}

We, now, define the Maslov index of a capped loop lying in a
coisotropic submanifold and tangent to the characteristic foliation
of the coisotropic submanifold.
\begin{Definition}[\emph{Maslov Index of a Capped Loop}]
Let $(W, \omega)$ be a symplectic manifold, $M^{2n-k}$ a coisotropic
submanifold of $(W, \omega)$ and $\FF$ its characteristic foliation.
Consider $x\colon  S^1 \rightarrow M$ a loop in $M$ tangent to $\Ff$
and $u\colon D^2 \rightarrow W$ a capping of the loop $x$ in $W.$ We
have the symplectic vector bundle decomposition
$$
TW\big{|}_{M} = (TW / TM \oplus T\Ff ) \oplus TM/T\Ff.
$$
Assume $x^* T \Ff$ is orientable and hence trivial. Denote by $\xi$
a trivialization of $x^*T\Ff$:
$$x^* T \Ff \stackrel{\xi}{\cong} S^1 \times T_{x(0)} \Ff.$$
Moreover, we have the following isomorphism
$$TW / TM \cong T^* \Ff,$$
and hence $\xi\oplus\xi^*$ can be viewed as a family of symplectic
maps
$$
\Xi_t\colon TW/TM_{x(0)} \oplus T_{x(0)}\Ff \rightarrow TW/TM_{x(t)}
\oplus T_{x(t)}\Ff.
$$
Denote by $H_t\colon (TM / T\Ff)_{x(0)} \rightarrow (TM /
T\Ff)_{x(t)}$ the holonomy along $x.$ The capping $u$ gives rise to
a symplectic trivialization, unique up to homotopy, of $x^* TW.$
Using such a trivialization, the map $\Xi_t \oplus H_t$ can be
viewed as a path
$$\Psi\colon [0,1]\rightarrow \Sp(2n)$$
which, up to some identifications, satisfies
\begin{eqnarray}
\label{properties:sympl_path} \Psi_0=Id,\quad
\Psi_t(T_{x(0)}M)=T_{x(t)}M \;\mbox{ and }\; \Psi_t\big{|}_{(TM /
T\Ff)_{x(0)}}=H_t.
\end{eqnarray}
Define the \emph{Maslov index} of $(x, u)$ as $\mu(x,u)
:=-\D(\Psi)$. If $x^*T\Ff$ is not orientable, we define $\mu(x,u)$
as $ \mu(x^2,u^2)/2$ where $(x^2,u^2)$ is the double cover of
$(x,u)$.
\end{Definition}

\begin{Remark}
By Proposition~\ref{prop:Maslov_welldefined}, $\mu(x, u)$ is
independent of the trivialization $\xi.$ However it may depend on
the capping $u.$ We give some properties of the coisotropic Maslov
index:
\begin{itemize}
\item Homotopy Invariance: $\mu(x, u)$ is invariant under a homotopy of $x$ in a leaf of $\Ff.$
\item Recapping: $\mu(x,u\#A)=\mu(x,u)+2 \left<c_1,A\right>$ where $u\#A$ is the notation for the recapping of $(x,u)$ by a $2$-sphere $A.$
\item Homogeneity: $\mu(x^k,u^k)=k\mu(x, u)$ where $(x^k,u^k)$ is the $k$-fold cover of $(x,u).$
\end{itemize}
\end{Remark}

\end{document}